\newtheorem{theorem}{Theorem}
\newtheorem{lemma}{Lemma}
\newtheorem{definition}{Definition}
\newtheorem{proposition}{Proposition}
\newcommand{\Mk}{\mathcal{M}_k}
\newcommand{\Mkk}{\mathcal{M}_{k-1}}
\newcommand{\Rn}{\mathbb{R}^{n}}
\newcommand{\RN}{\mathbb{R}^{n+1}}
\newcommand{\Cln}{\mathcal{C}l_{n}}
\newcommand{\ClN}{\mathcal{C}l_{n}}
\newcommand{\Sn}{\mathbb{S}^{n-1}}
\newcommand{\SN}{\mathbb{S}^{n}}
\newcommand{\wzwo} {\stackrel{\circ}{{W}^1_2}}
\newcommand{\wzwop} {\stackrel{\circ}{{W}^k_p}}
\begin{document}

\title{Spherical $\Pi$-type Operators in Clifford Analysis and Applications }
\author{Wanqing Cheng, John Ryan and Uwe K\"{a}hler}
\date{}
\maketitle
%%%%%%%%%%%%%%%       abstract           %%%%%%%%%%%%
\begin{abstract}
The $\Pi$-operator (Ahlfors-Beurling transform) plays an important role in solving the Beltrami equation. In this paper we define two $\Pi$-operators on the n-sphere. The first spherical $\Pi$-operator is shown to be an $L^2$ isometry up to isomorphism. To improve this, with the help of the spectrum of the spherical Dirac operator, the second spherical $\Pi$ operator is constructed as an isometric $L^2$ operator over the sphere. Some analogous properties for both $\Pi$-operators are also developed. We also study the applications of both spherical $\Pi$-operators to the solution of the spherical Beltrami equations.
\end{abstract}
{\bf Keywords:}\quad  Singular integral operator, $\Pi$-operator, Spectrum, Beltrami equation.

%%%%%%%%%              Introduction            %%%%%%%%%%%%%
\section{Introduction}
The $\Pi$-operator is one of the tools used to study smoothness of functions over Sobolev spaces and to solve some first order partial differential equations such as the Beltrami equation which describes quasi-conformal mappings. In one dimensional complex analysis, the Beltrami equation is the partial differential equation:
$$\displaystyle \frac{\partial w}{\partial \overline{z}}=\mu \displaystyle \frac{\partial w}{\partial z}$$
where $\mu=\mu(z)$ is a given complex function, and $z=x+iy\in\mathbb{C}$, $\partial_{z}=\displaystyle\frac{\partial}{\partial x}-i\displaystyle\frac{\partial}{\partial y}$, $\partial_{\bar{z}}=\displaystyle\frac{\partial}{\partial x}+i\displaystyle\frac{\partial}{\partial y}$. It can be transformed to a fixed-point equation
$$h=q(z)(I+\Pi_\Omega h)$$
where
$$\Pi_\Omega h(z)=-\displaystyle\frac{1}{\pi i}\displaystyle\int_\Omega \frac{h(\xi)}{(\xi-z)^2}d\xi_1 d\xi_2$$
is the complex $\Pi$-operator. This singular integral operator acts as an isometry from $L^2(\mathbb{C})$ to $L^2(\mathbb{C})$ with the $L_p$-norm being a long standing conjecture by Ivaniec.
\par
With the help of Clifford algebras, the classical Beltrami equation and $\Pi$-operator with some well known results can be generalized to higher dimensions. Abundant results in Euclidean space have been found( see \cite{Blaya,Kahler,GK}). In order to generate results in Euclidean space to the unit sphere, we define two $\Pi$-operators related to the conformally invariant spherical Dirac operator. The idea to consider the $n$-sphere is not only motivated by being the classic example of a manifold and being invariant under the conformal group, but also by the fact that in the case of $n=3$ due to the recently proved Poincar\'e conjecture  there is a wide class of manifolds which are homeomorphic to the 3-sphere. This makes our results much more general and valid for any simply connected closed 3-manifold. In particular, results on local and global homeomorphic solutions of the sperical Beltrami equation carry over to such manifolds. \\
\par
This paper is organized as follows: In section 2, we briefly introduce Clifford algebras, Clifford analysis, the Euclidean Dirac operator, and some well known integral formulas. In Section 3, we review the construction and some properties for the $\Pi$-operator in Euclidean space. In section 4, we construct the $\Pi$-operator in a generalized spherical space and solve the Beltrami equation with a singular integral operator $\Pi_{s,0}$. In the last section, we will investigate the spectra of several spherical Dirac type operators and the spherical Laplacian, and construct the isometric spherical $\Pi$-operator $\Pi_{s,1}$.\\
\\
\textbf{Dedication}: This paper is dedicated to Franciscus Sommen on the occasion of his 60th birthday.
%%%%%%%%%%%         Preliminaries       %%%%%%%%%%%%%%%%%%%
\section{Preliminaries}
\par
Let $e_1,\cdots,e_n$ be an orthonormal basis of $\RN$. The Clifford algebra $\ClN$ is the algebra over $\Rn$ generated by the relation
$$x^2=-||x||^2e_0$$
where $e_0$ is the identity of $\ClN$. These algebras were introduced by Clifford in 1878 in \cite{Clifford}. Each element of the algebra $\ClN$ can be represented in the form
\begin{eqnarray*}
x=\sum_{A\subset\{1,\cdots,n\}}x_Ae_A
\end{eqnarray*}
where $x_A$ are real numbers. The norm of a Clifford number $x$ is defined as
 $$\|x\|^2=\sum_{A\subset\{1,\cdots,n\}}x_A^2.$$
 If the set $A$ contains $k$ elements, then we call $e_A$ a \emph{k-vector}. Likewise, we call each linear combination of $k$-vectors a $k$-vector. The vector space of all $k$-vectors is denoted by $\Lambda^k\Rn$. Obviously, $\ClN$ is the direct sum of all $\Lambda^k\Rn$ for $k\leq n$. The following anti-involutions are well known:
\begin{itemize}
\item Reversion:\\
\begin{eqnarray*}
\tilde{a}=\sum_{A} (-1)^{|A|(|A|-1)/2}a_Ae_A,
\end{eqnarray*}
where $|A|$ is the cardinality of $A$. In particular, $\widetilde{e_{j_1}\cdots e_{j_r}}=e_{j_r}\cdots e_{j_1}$. Also $\widetilde{ab}=\tilde{b}\tilde{a}$ for $a,\ b\in\mathcal{C}l_n$.
\item Clifford conjugation:\\
\begin{eqnarray*}
a^{\dagger}=\sum_{A} (-1)^{|A|(|A|+1)/2}a_Ae_A,
\end{eqnarray*}
satisfying ${e_{j_1}\cdots e_{j_r}}^{\dagger}=(-1)^re_{j_r}\cdots e_{j_1}$ and ${(ab)}^{\dagger}={b}^{\dagger}{a}^{\dagger}$ for $a,\ b\in\mathcal{C}l_n$.
\item Clifford involution:\\
\begin{eqnarray*}
\bar{a}=\tilde{a}^{\dagger}=\widetilde{a^{\dagger}}.
\end{eqnarray*}
\end{itemize}
In the following we identify the Euclidean space $\RN$ with the direct sum $\Lambda^0\Rn\oplus\Lambda^1\Rn$. For all that follows let $\Omega\subset \RN$ be a domain with a sufficiently smooth boundary $\Gamma=\partial \Omega$. Then functions $f$ defined in $\Omega$ with values in $\ClN$ are considered. These functions may be written as
\begin{eqnarray*}
f(x)=\sum_{A\subseteq\{ e_1,e_2,...e_n \}}e_Af_A(x),\ \ (x\in \Omega).
\end{eqnarray*}
Properties such as continuity, differentiability, integrability, and so on, which are ascribed to $f$ have to be possessed by all components $f_A(x),\ (A\subseteq\{ e_1,e_2,...e_n \})$. The spaces $C^k(\Omega,{\mathcal{C}l_n}), L_p(\Omega, {\mathcal{C}l_n}) $ are defined as right Banach modules with the corresponding traditional norms. The space $L_2(\Omega,{\mathcal{C}l_n})$ is a right Hilbert module equipped with a ${\mathcal{C}l_n}$-valued sesquilinear form
$$
(u,v)=\int_\Omega \overline{u(\eta)} v(\eta)\, d\Omega_\eta. 
$$ 
Furthermore, $W_p^k(\Omega,{\mathcal{C}l_n}), k\in \mathbb{N}\cup\{0\},1\leq p<\infty$ denotes the Sobolev spaces as the right module of all functionals whose derivatives belong to $L_p(\Omega,{\mathcal{C}l_n})$, with norm
  $$
  \|f\|_{W_p^k(\Omega,{\mathcal{C}l_n})}:=\left(\sum_{A}\sum_{\|\alpha\|\leq k}\|D^\alpha_w f_A\|_{L_p(\Omega,{\mathcal{C}l_n})}^p\right)^{1/p}.
  $$ 
The closure of the space of test functions $C^\infty_0(\Omega, {\mathcal{C}l_n})$ in the $W_p^k$-norm will be denoted by $\wzwop(\Omega, {\mathcal{C}l_n})$.

The Euclidean Dirac operators $D_x$ and $D_0$ arise as generalizations of the Cauchy-Riemann operator of one complex variable. As homogenous linear differential operators,
$$D_x:=\sum_{i=1}^{n}e_i\partial_{x_i},$$
$$D_0:=e_0\partial_{x_0}+\sum_{i=1}^{n}e_i\partial_{x_i}=e_0\partial_{x_0}+D_x.$$
Note $D_x^2=-\Delta_x$, where $\Delta_x$ is the Laplacian in $\RN$, and $\Delta_{n+1}=D_0\overline{D_0}$, where $\overline{D_0}$ is the Clifford conjugate of $D_0$.

\begin{definition}
A $\ClN$-valued function $f(x)$ defined on a domain $\Omega$ in $\RN$ is called left monogenic if $$D_xf(x)=\sum_{i=1}^{n}e_i\partial_{x_i}f(x)=0.$$ Similarly, $f$ is called a right monogenic function if it satisfies $$f(x)D_x=\sum_{i=1}^{n}\partial_{x_i}f(x)e_i=0$$
\end{definition}

\par
Let $f \in C^1(\Omega, {\mathcal{C}l_n})$, $G(x-y)=\displaystyle\frac{\overline{x-y}}{\|x-y\|^{n+1}}$ beign the fundamental solution of $D_0$. Hence, the Cauchy transform is defined as
\begin{eqnarray*}
T_\Omega f(x)=\int_\Omega G(x-y)f(y)dy,
\end{eqnarray*}
where $T$ is the generalization of the Cauchy transform in the complex plane to Euclidean space, and it is the right inverse of $D_0$, that is $D_0T=I$. Also, the non-singular boundary integral operator is given by
\begin{eqnarray*}
F_{\partial \Omega}f(x)=\int_{\partial \Omega}G(x-y)n(y)f(y)d\sigma(y).
\end{eqnarray*}

We have the Borel-Pompeiu Theorem as follows.
\begin{theorem} (\cite{GK}) For $f\in C^1(\Omega,\ClN)\cap C(\bar\Omega)$, we have
\begin{eqnarray*}
f(x)=\int_{\partial \Omega}G(x-y)n(y)f(y)d\sigma(y)+\int_\Omega G(x-y)D_0f(y)dy,
\end{eqnarray*}
In particular, if $f\in \wzwo(\Omega,{\mathcal{C}l_n})$, then
\begin{eqnarray*}
f(x)=\int_\Omega G(x-y)D_0f(y)dy.
\end{eqnarray*}
\end{theorem}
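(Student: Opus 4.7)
The plan is to reduce the formula to a Clifford version of Stokes' theorem applied on $\Omega$ with the singularity at $x$ excised. Specifically, recall that for $g,f\in C^1(\Omega,\ClN)$ the Clifford--Stokes identity reads
$$\int_{\partial\Omega}g(y)n(y)f(y)\,d\sigma(y)=\int_\Omega \bigl[(g(y)D_0)f(y)+g(y)(D_0f(y))\bigr]dy,$$
which follows by expanding both sides component-wise and invoking the classical divergence theorem. So my first step is to state this identity and verify that $G(x-\cdot)$ is (right) monogenic away from the singularity, i.e.\ $G(x-\cdot)D_0=0$ on $\RN\setminus\{x\}$, which is a direct computation from $G(z)=\overline{z}/\|z\|^{n+1}$ together with $\Delta_{n+1}=D_0\overline{D_0}$.

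Next, fix $x\in\Omega$ and choose $\epsilon>0$ small enough that $\overline{B_\epsilon(x)}\subset\Omega$. Set $\Omega_\epsilon=\Omega\setminus\overline{B_\epsilon(x)}$, on which $g(y):=G(x-y)$ is smooth and satisfies $gD_0=0$. Applying Clifford--Stokes on $\Omega_\epsilon$ yields
$$\int_{\partial\Omega}G(x-y)n(y)f(y)\,d\sigma(y)-\int_{\partial B_\epsilon(x)}G(x-y)n_\epsilon(y)f(y)\,d\sigma(y)=\int_{\Omega_\epsilon}G(x-y)D_0f(y)\,dy,$$
where $n_\epsilon$ is the outward unit normal to $B_\epsilon(x)$ (with the sign accounting for the orientation of $\partial\Omega_\epsilon$). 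It remains to let $\epsilon\to 0$.

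For the volume integral, since $G(x-y)=O(\|x-y\|^{-n})$ is locally integrable on $\RN$ and $D_0f$ is bounded on compact subsets of $\Omega$, dominated convergence gives convergence to $\int_\Omega G(x-y)D_0f(y)\,dy$. The delicate step, which I expect to be the main obstacle, is the $\partial B_\epsilon(x)$ integral: parametrize $y=x+\epsilon\omega$ with $\omega\in\SN$, so $n_\epsilon(y)=\omega$ and $G(x-y)=\overline{-\epsilon\omega}/\epsilon^{n+1}=-\overline{\omega}/\epsilon^n$. Using $\overline{\omega}\omega=1$ for a unit vector and $d\sigma=\epsilon^n d\omega$ on the sphere, the integrand simplifies to $-f(x+\epsilon\omega)$, so the boundary term equals $-\int_{\SN}f(x+\epsilon\omega)\,d\omega/|\SN|\cdot|\SN|$. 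By continuity of $f$ at $x$ this limit is $-\omega_n f(x)$, where after incorporating the normalization of $G$ (which is chosen exactly so that this constant is $-1$), we obtain $-f(x)$. Moving it to the other side yields the desired identity.

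Finally, for $f\in\wzwo(\Omega,\ClN)$, approximate by a sequence $\varphi_k\in C_0^\infty(\Omega,\ClN)$ in the $W_2^1$-norm; each $\varphi_k$ has vanishing trace on $\partial\Omega$, so the first integral disappears, and passing to the limit (using continuity of the Cauchy transform $T_\Omega$ on $L_2$) gives $f(x)=\int_\Omega G(x-y)D_0f(y)\,dy$.
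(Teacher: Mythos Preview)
The paper does not supply its own proof of this theorem: it is quoted from \cite{GK} as a preliminary result, so there is nothing in the paper to compare your argument against. Your approach---excise a small ball around the singularity, apply the Clifford--Stokes identity on the punctured domain using that $G(x-\cdot)$ is right monogenic, and pass to the limit---is exactly the standard proof one finds in \cite{GK} and the surrounding literature, and the density argument for the $\wzwo$ case is likewise the usual one.

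One small point worth tightening: as written in the paper, $G(x-y)=\overline{x-y}/\|x-y\|^{n+1}$ carries no $1/\omega_n$ factor, so the sphere integral you compute produces $\omega_n f(x)$ rather than $f(x)$; you acknowledge this (``after incorporating the normalization of $G$''), but it would be cleaner to state explicitly that the formula as displayed requires $G$ to be the \emph{normalized} fundamental solution. Also double-check the sign when you ``move it to the other side'': with $n_\epsilon=\omega$ outward to $B_\epsilon(x)$ and $G(x-y)n_\epsilon=-\overline{\omega}\omega/\epsilon^n=-1/\epsilon^n$, the term $-\int_{\partial B_\epsilon}G\,n_\epsilon f\,d\sigma$ tends to $+f(x)$ (after normalization), which literally gives $f=\int_\Omega G\,D_0f-\int_{\partial\Omega}G\,n\,f$. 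Whether this matches the displayed formula depends on the sign/orientation conventions hidden in \cite{GK}; just make the convention you use explicit so the reader can reconcile it.
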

%%%%%%%%   Pi operator in Euclidean space  %%%%%%%
\section{$\Pi$-operator in Euclidean space}
It is well known that in complex analysis, the $\Pi$-operator can be realized as the composition of $\partial_{\bar{z}}$ and the Cauchy transform. As the generalization to higher dimension in Clifford algebra, we have the $\Pi$-operator in $\RN$ defined as follows.
\begin{definition}
The $\Pi$-operator in Euclidean space $\RN$ is defined as
$$\Pi=\overline{D_0}T.$$
\end{definition}
The following are some well known properties for the $\Pi$-operator.
\begin{theorem} (\cite{GK})
Suppose $f\in \wzwop(\Omega)(1<p<\infty, k\geq 1)$, then
\begin{enumerate}
\item $D_0\Pi f=\overline{D_0}f,$
\item $\Pi D_0f=\overline{D_0}f-\overline{D_0}F_{\partial\Omega}f,$
\item $F_{\partial\Omega}\Pi f=(\Pi-T\overline{D_0})f,$
\item $D_0\Pi f-\Pi D_0f=\overline{D_0}F_{\partial\Omega}f.$
\end{enumerate}
\end{theorem}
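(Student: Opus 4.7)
The argument will rest on two facts already available in the paper: (a) the factorization $\Delta_{n+1}=D_0\overline{D_0}=\overline{D_0}D_0$, which shows in particular that these two first-order operators commute as differential operators (this is a direct consequence of $e_ie_j+e_je_i=-2\delta_{ij}$ together with the equality of mixed partials, in the same spirit as the identity $D_x^2=-\Delta_x$ noted just before Definition~1); and (b) the Borel--Pompeiu theorem, which gives on the one hand $D_0T=I$ and on the other $TD_0f=f-F_{\partial\Omega}f$. These two ingredients suffice to produce all four identities by formal operator manipulation, without any further kernel computation.

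Concretely, for (1) I would write $D_0\Pi f=D_0\overline{D_0}Tf=\overline{D_0}D_0Tf=\overline{D_0}f$, using commutativity followed by $D_0T=I$. For (2) I would simply insert Borel--Pompeiu inside $\Pi$, obtaining $\Pi D_0f=\overline{D_0}TD_0f=\overline{D_0}(f-F_{\partial\Omega}f)=\overline{D_0}f-\overline{D_0}F_{\partial\Omega}f$. For (3), since $\Pi f$ is generally not compactly supported in $\Omega$ even when $f$ is, the full Borel--Pompeiu formula applied to $\Pi f$ itself gives
\[\Pi f=F_{\partial\Omega}\Pi f+TD_0\Pi f=F_{\partial\Omega}\Pi f+T\overline{D_0}f,\]
where the second equality invokes (1); solving for the boundary term yields $F_{\partial\Omega}\Pi f=(\Pi-T\overline{D_0})f$. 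Finally, (4) follows by subtracting (2) from (1), the $\overline{D_0}f$ contributions cancelling.

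The one point that needs genuine care, and is really the only obstacle, is the legitimacy of the manipulations in (1), namely pushing $D_0$ through the singular convolution $\overline{D_0}T$ and using $D_0T=I$ pointwise almost everywhere rather than only distributionally. This is where the hypotheses $1<p<\infty$ and $k\geq1$ enter: one needs the mapping properties of the Ahlfors--Beurling type operator $\Pi$ as a Calder\'on--Zygmund operator bounded on $L_p$ and preserving $W^k_p$-regularity, so that $D_0\Pi f$ makes sense in $W^{k-1}_p$ and the formal commutation becomes a rigorous identity. A density argument from $C_0^\infty(\Omega,\mathcal{C}l_n)$, where all computations are classical, then extends the four identities to all of $\wzwop(\Omega,\mathcal{C}l_n)$ by boundedness of every operator appearing on the two sides.
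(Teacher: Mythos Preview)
Your argument is correct and is precisely the standard derivation: commutativity $D_0\overline{D_0}=\overline{D_0}D_0$ together with $D_0T=I$ and the Borel--Pompeiu identity $TD_0=I-F_{\partial\Omega}$ give (1) and (2), applying Borel--Pompeiu to $\Pi f$ and invoking (1) gives (3), and (4) is the difference of (1) and (2). The present paper does not supply its own proof of this theorem; it merely quotes the result from \cite{GK}, so there is nothing further to compare against.
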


The following decomposition of $L^2(\Omega,\ClN)$ helps us to observe that the $\Pi$-operator actually maps $L^2(\Omega,\ClN)$ to $L^2(\Omega,\ClN)$.
\begin{theorem} (\cite{GK}) \textbf{($L^2(\Omega,\ClN)$ Decomposition)}
$$L^2(\Omega,\ClN)=L^2(\Omega,\ClN)\bigcap Ker \overline{D_0}\bigoplus D_0(\wzwo(\Omega, \ClN)),$$
and
$$L^2(\Omega,\ClN)=L^2(\Omega,\ClN)\bigcap Ker D_0\bigoplus\overline{D_0}(\wzwo(\Omega, \ClN)).$$
\end{theorem}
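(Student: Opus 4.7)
The plan is to prove this as a Hodge-type decomposition for the Dirac operator $D_0$, carried out in two steps: first, orthogonality of the two summands under the $\ClN$-valued sesquilinear form, and second, existence of the decomposition for every $f \in L^2(\Omega,\ClN)$.

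For orthogonality, I take $g \in L^2(\Omega,\ClN) \cap \Ker \overline{D_0}$ and $\phi \in \wzwo(\Omega,\ClN)$. By density of $C^\infty_0(\Omega,\ClN)$ in $\wzwo(\Omega,\ClN)$ together with boundedness of $D_0 : \wzwo \to L^2$, I may assume $\phi \in C^\infty_0(\Omega,\ClN)$. Using $\overline{ab} = \bar b\,\bar a$ together with $\bar e_0 = e_0$ and $\bar e_i = -e_i$ for $i \geq 1$, the hypothesis $\overline{D_0}g = 0$ rewrites as the right-monogenic identity $\sum_\mu (\partial_{x_\mu}\overline{g}) e_\mu = 0$. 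Applying the divergence theorem to the exact form $\sum_\mu \partial_{x_\mu}(\overline{g}\, e_\mu\, \phi)$, whose boundary term vanishes by compact support of $\phi$, gives
$$(g, D_0 \phi) = \int_\Omega \overline{g}\, D_0 \phi\, d\Omega = -\int_\Omega \Big(\sum_\mu (\partial_{x_\mu}\overline{g})\, e_\mu\Big) \phi\, d\Omega = 0.$$

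For existence, the key factorization is $\overline{D_0}D_0 = D_0 \overline{D_0} = \Delta_{n+1}$, the scalar Laplacian acting componentwise on $\ClN$-valued functions (obtained by expanding the product and invoking $e_i e_j + e_j e_i = -2\delta_{ij}$ for $i,j \geq 1$). Given $f \in L^2(\Omega,\ClN)$, each Clifford component of the distribution $\overline{D_0} f$ is a linear combination of first distributional derivatives of $L^2$ functions, hence lies in the dual space $W^{-1}_2(\Omega)$. Classical Dirichlet theory (Lax-Milgram plus the Poincar\'e inequality on $\wzwo(\Omega)$) applied componentwise yields a unique $\phi \in \wzwo(\Omega,\ClN)$ with $\Delta \phi = \overline{D_0} f$. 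Then $g := f - D_0 \phi \in L^2(\Omega,\ClN)$ satisfies $\overline{D_0} g = \overline{D_0} f - \overline{D_0} D_0 \phi = 0$, yielding the decomposition $f = g + D_0\phi$.

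The main technical obstacle will be to rigorously justify the distributional identity $\Delta \phi = \overline{D_0} f$ when $f$ is only in $L^2$, and to verify that the componentwise Dirichlet solutions assemble into a $\ClN$-valued function in $\wzwo(\Omega,\ClN)$; both reduce, after some bookkeeping with the anti-involutions, to the continuity of $\overline{D_0} : L^2 \to W^{-1}_2$ dual to $D_0 : \wzwo \to L^2$. The companion decomposition $L^2(\Omega,\ClN) = \bigl(L^2(\Omega,\ClN) \cap \Ker D_0\bigr) \oplus \overline{D_0}(\wzwo(\Omega,\ClN))$ then follows by an identical argument with the roles of $D_0$ and $\overline{D_0}$ swapped, thanks to the symmetric factorization $\Delta_{n+1} = D_0 \overline{D_0} = \overline{D_0} D_0$.
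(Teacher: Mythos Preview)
The paper does not prove this statement; it is quoted from \cite{GK} without argument, so there is no in-paper proof to compare against. Your approach---orthogonality via integration by parts, existence via solving the Dirichlet problem $\Delta\phi=\overline{D_0}f$ componentwise using $\overline{D_0}D_0=\Delta_{n+1}$---is exactly the classical Hodge-type argument that underlies the result in \cite{GK}, so in spirit you have reproduced the intended proof.

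One genuine slip in your orthogonality step deserves correction. You invoke $\overline{ab}=\bar b\,\bar a$, but in this paper the bar is defined as the composition $\bar a=\tilde a^{\dagger}$ of two anti-automorphisms and is therefore an \emph{automorphism}: $\overline{ab}=\bar a\,\bar b$. With that convention, applying the bar to $\overline{D_0}g=0$ yields the \emph{left} identity $D_0\bar g=0$, not the right-monogenic identity $\bar g\,D_0=0$ that your integration-by-parts computation actually needs; so orthogonality with respect to $(u,v)=\int_\Omega \bar u\,v$ does not follow as written. The fix is either to run the argument with the Clifford conjugation $\dagger$ (the anti-automorphism), for which $(\overline{D_0}g)^{\dagger}=g^{\dagger}D_0$ gives exactly what you need, or to bypass orthogonality altogether and deduce directness of the sum from uniqueness in the Dirichlet problem: if $D_0\psi\in\Ker\overline{D_0}$ with $\psi\in\wzwo$, then $\Delta_{n+1}\psi=0$ forces $\psi=0$. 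Your existence argument is unaffected by this issue and is correct as stated.
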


Notice that, since
\begin{eqnarray*}
&&\Pi(L^2(\Omega,\ClN)\bigcap Ker \overline{D_0})=L^2(\Omega,\ClN)\bigcap Ker D_0,\\
&&\Pi(D_0(\wzwo(\Omega, \ClN)))=\overline{D_0}(\wzwo(\Omega, \ClN)),
\end{eqnarray*}
hence, this $\Pi$-operator is from $L^2(\Omega,\ClN)$ to $L^2(\Omega,\ClN)$.\\
\par
One key property of the $\Pi$-operator is that it is an $L^2$ isometry, in other words,

\begin{theorem} (\cite{Blaya})
For functions in $L^2(\Omega,\ClN)$, we have
\begin{eqnarray*}
\Pi^* \Pi=I.
\end{eqnarray*}
\end{theorem}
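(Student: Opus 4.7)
The plan is to establish the isometry by passing to the Fourier side on $\RN$. First I would extend any $f\in L^2(\Omega,\ClN)$ by zero to $L^2(\RN,\ClN)$, so that $T$ becomes convolution with the fundamental solution $G$ of $D_0$ and both $D_0$ and $\overline{D_0}$ become Clifford-valued Fourier multipliers. Writing $\zeta=\xi_0+\sum_{i=1}^{n}e_i\xi_i$ for the paravector associated to $\xi\in\RN$ (so that $\zeta\bar\zeta=|\xi|^2$), the symbols read $\sigma(D_0)(\xi)=i\zeta$ and $\sigma(\overline{D_0})(\xi)=i\bar\zeta$, hence $\sigma(T)(\xi)=(i\zeta)^{-1}=-i\bar\zeta/|\xi|^2$ and
\[
\sigma(\Pi)(\xi)=(i\bar\zeta)\bigl(-i\bar\zeta/|\xi|^2\bigr)=\bar\zeta^{\,2}/|\xi|^2.
\]

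Second, I would verify that pointwise left multiplication by $\sigma(\Pi)(\xi)$ is an isometry on $\ClN$ at every $\xi\neq 0$. The key identity is that for any paravector $v$ and any $a\in\ClN$ one has $\overline{va}\,(va)=\bar a(\bar v v)a=\|v\|^2\bar a a$; taking scalar parts yields $\|va\|=\|v\|\cdot\|a\|$. Applied twice with $v=\bar\zeta$ (using $\|\bar\zeta\|^2=|\xi|^2$), this gives $\|\sigma(\Pi)(\xi)\,a\|=\|a\|$ for every $a\in\ClN$, so combined with the componentwise Plancherel identity one obtains $\|\Pi f\|_{L^2}=\|f\|_{L^2}$, which is equivalent to $\Pi^*\Pi=I$.

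The main obstacle I expect is keeping the non-commutative Clifford multiplications in the correct order throughout the Fourier computation and rigorously justifying $\sigma(T)=\sigma(D_0)^{-1}$ in a distributional sense, in particular handling the singularity at $\xi=0$. A parallel operator-theoretic route, avoiding Fourier altogether, is to invoke the decomposition $L^2(\Omega,\ClN)=\bigl(\ker\overline{D_0}\cap L^2\bigr)\oplus D_0\bigl(\wzwo(\Omega,\ClN)\bigr)$ recorded above: on the second summand, $f=D_0\phi$ implies $\Pi f=\overline{D_0}\phi$ via the Borel-Pompeiu formula on $\wzwo$, and the identity $\|D_0\phi\|_{L^2}=\|\overline{D_0}\phi\|_{L^2}$ follows from $D_0\overline{D_0}=\overline{D_0}D_0=\Delta$ together with vanishing boundary traces under integration by parts. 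The isometry on the $\ker\overline{D_0}\cap L^2$ summand is the genuinely delicate piece and is precisely where either the Fourier argument above or the dual decomposition for the image space must be brought in.
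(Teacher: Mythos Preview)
The paper does not supply its own proof of this theorem; it is quoted verbatim from \cite{Blaya} with no argument given. There is therefore nothing in the present paper to compare your proposal against line by line.

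That said, your Fourier-multiplier route is precisely the standard argument for this result and is essentially what one finds in the cited source: extend by zero to $\RN$, recognise $\Pi$ as the Fourier multiplier with symbol $\bar\zeta^{\,2}/|\xi|^2$, and use that left multiplication by a unit paravector (hence by the square of one) is an orthogonal map on $\ClN$. Your pointwise isometry argument via $\overline{va}\,(va)=\|v\|^2\,\bar a a$ is correct for paravectors $v$, since for such $v$ the involution and Clifford conjugation coincide and $v\bar v=\|v\|^2$ is scalar; iterating once handles $\bar\zeta^{\,2}/|\xi|^2$.

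The one point that deserves care, and which you flag yourself, is the passage between $L^2(\Omega)$ and $L^2(\RN)$. The Fourier computation shows $\|\Pi f\|_{L^2(\RN)}=\|f\|_{L^2(\RN)}$ after zero-extension, but $\Pi f$ need not be supported in $\Omega$, so the literal identity $\Pi^\ast\Pi=I$ on $L^2(\Omega,\ClN)$ is really a full-space statement (or requires reading $\Pi$ as mapping into $L^2(\RN,\ClN)$). This is an imprecision already present in the theorem as stated in the paper rather than a defect in your argument. Your alternative operator-theoretic sketch via the Hodge-type decomposition and $D_0\overline{D_0}=\overline{D_0}D_0=\Delta_{n+1}$ is also sound on the $D_0(\wzwo)$ summand, and you correctly identify the $\ker\overline{D_0}$ piece as the place where one ultimately falls back on the symbol calculation.
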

To complete this section, we give the classic example of the $\Pi$-operator solving the Beltrami equation. Let $\Omega \subseteq \RN$, $q:\Omega\rightarrow \ClN$ a bounded measurable function and $\omega:\Omega\rightarrow \mathcal{C}l_n$ be a sufficiently smooth function. The generalized Beltrami equation
$$D_0 \omega=q\overline{D_0}\omega$$
could be transformed into an integral equation
$$h=q(\overline{D_0}\phi+\Pi h)$$
where $\omega=\phi+Th$, which could have a unique solution if $\|q\|\leq q_0< \displaystyle\frac{1}{\|\Pi\|}$, see \cite{GK}, with $q_0$ being a constant. This tells us that the existence of a unique solution to the Beltrami equation depends on the norm estimate for the $\Pi$-operator.

%%%%%%%%%%%%%%%%%%%%%%%%%%%%%%%%%%%%%%%%%%%
\section{Construction and properties of spherical $\Pi$-type operator with generalized spherical Dirac operator}
\par
Recall that in one dimensional complex analysis, the $\Pi$-operator is defined as
$$\Pi f(z):=\partial_{\bar{z}}Tf(z)=\partial_{\bar{z}}\int_{\Omega}\displaystyle\frac{f(z)}{\eta-z}dz,$$
where $z=x+iy\in\mathbb{C}$ and $\partial_{\bar{z}}=\displaystyle\frac{\partial}{\partial x}+i\displaystyle\frac{\partial}{\partial y}$. This suggests us to generalize the $\Pi$-operator, we need to consider a variable $z$ with ``real" and ``imaginary" parts, so we can take conjugate of $\partial_z$ to define the $\Pi$-operator.

\subsection{Spherical $\Pi$-type operator with generalized spherical Dirac operator}

Let $\SN$ be the n-unit sphere. The spherical Dirac operator $D_{s,0}$ on $\SN$ is defined as follows.
$$\overline{x}D_0=\displaystyle\sum_{j=1}^{n} e_0e_j(x_0\partial_{x_j}-x_j\partial_{x_0})-\displaystyle\sum_{i=1,j>i}^{n} e_ie_j(x_i\partial_{x_j}-x_j\partial_{x_i})+\displaystyle\sum_{j=0}^n (x_j\partial_{x_j}).$$
Denote $\Gamma_0=\displaystyle\sum_{j=1}^{n} e_0e_j((x_0\partial_{x_j}-x_j\partial_{x_0}))-\displaystyle\sum_{i=1,j>i}^{n} e_ie_j((x_i\partial_{x_j}-x_j\partial_{x_i})).$
Hence,
$$D_{s,0}=\overline{x}^{-1}\overline{x}D_{s,0}=\frac{x}{\|x\|^2}(E_r+\Gamma_0)=\xi(D_r+\frac{\Gamma_0}{r}),$$
where $rD_r=E_r$, $r=\|x\|$ and $\xi \in \SN$.\\
In particular, we have the conformally invariant spherical Dirac operator as follows,
$$D_{s,0}=w(\Gamma_0-\frac{n}{2}).$$\\

Similarly, we have $\overline{D_{s}}=\overline{\xi}(D_r+\displaystyle \frac{\overline{\Gamma_0}}{r})$, and since $\overline{D_s}$ is also conformally invariant, we have $\overline{D_s}=\overline{w}(\overline{\Gamma_0}-\frac{n}{2})$, where
\begin{eqnarray*}
\overline{\Gamma_0}=-\displaystyle\sum_{j=1}^{n} e_0e_j(x_0\partial_{x_j}-x_j\partial_{x_0})-\displaystyle\sum_{i=1,j>i}^{n} e_ie_j(x_i\partial_{x_j}-x_j\partial_{x_i}).
\end{eqnarray*}
Here $\overline{D_s}$ is the Clifford involution of $D_s$.

\begin{lemma}
\begin{eqnarray*}
\Gamma_0\overline{w}=n\overline{w}-\overline{w}\overline{\Gamma_0};\\
\overline{\Gamma_0}w=nw-w\Gamma_0.
\end{eqnarray*}
\end{lemma}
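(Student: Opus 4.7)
The plan is to read both identities as operator equalities applied to an arbitrary smooth $\mathcal{C}l_n$-valued function $f$, with $w$ identified with the position vector $x=x_0+\sum_{i=1}^n x_ie_i$ (so that $\overline{w}=x_0-\sum_{i=1}^n x_ie_i$). The two identities are symmetric: the second will follow from the first by interchanging $w\leftrightarrow\overline{w}$ and $\Gamma_0\leftrightarrow\overline{\Gamma_0}$, via the companion factorization $x\overline{D_0}=E_r+\overline{\Gamma_0}$, which is obtained by the same calculation as $\overline{x}D_0=E_r+\Gamma_0$ but with the sign of every $e_k\partial_{x_k}$ reversed.

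For the first identity the strategy is to bootstrap from the factorization $\Gamma_0=\overline{x}D_0-E_r$ already displayed in the text. Since $\overline{x}$ is homogeneous of degree one, Euler's identity gives $E_r(\overline{x}f)=\overline{x}f+\overline{x}E_rf$. Expanding $D_0(\overline{x}f)$ via the Leibniz rule on each scalar derivative in $D_0=\partial_{x_0}+\sum_{k=1}^n e_k\partial_{x_k}$, using $\partial_{x_0}\overline{x}=1$ and $\partial_{x_k}\overline{x}=-e_k$ (so $D_0(\overline{x})=n+1$), yields
\[
D_0(\overline{x}f)=(n+1)f+\overline{x}D_0f+\sum_{k=1}^n(e_k\overline{x}-\overline{x}e_k)\,\partial_{x_k}f.
\]
Substituting into $\Gamma_0(\overline{x}f)=\overline{x}D_0(\overline{x}f)-E_r(\overline{x}f)$ and applying $\overline{x}D_0f=\Gamma_0f+E_rf$ once more to the $\overline{x}^2D_0f$ term, the $\overline{x}E_rf$ pieces cancel and one is left with
\[
\Gamma_0(\overline{x}f)=n\overline{x}f+\overline{x}\Gamma_0f+\overline{x}\sum_{k=1}^n(e_k\overline{x}-\overline{x}e_k)\,\partial_{x_k}f.
\]

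The crux is then to recognise the last sum as $-(\Gamma_0+\overline{\Gamma_0})f$. A direct bivector computation gives $e_k\overline{x}-\overline{x}e_k=2\sum_{i\neq k}x_ie_ie_k$; resumming $\sum_{k}\sum_{i\neq k}x_ie_ie_k\,\partial_{x_k}$ by splitting into $i<k$ and $i>k$, relabeling indices in the second piece, and using $e_ie_k=-e_ke_i$ assembles the double sum into $\sum_{j<k}e_je_kL_{jk}$ with $L_{jk}=x_j\partial_{x_k}-x_k\partial_{x_j}$. Comparing with the displayed expressions for $\Gamma_0$ and $\overline{\Gamma_0}$ shows $2\sum_{j<k}e_je_kL_{jk}=-(\Gamma_0+\overline{\Gamma_0})$, so the commutator sum does equal $-(\Gamma_0+\overline{\Gamma_0})f$. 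Plugging back, $\overline{x}\Gamma_0f$ cancels and only $-\overline{x}\,\overline{\Gamma_0}f$ survives, yielding $\Gamma_0(\overline{x}f)=n\overline{x}f-\overline{x}\,\overline{\Gamma_0}f$.

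The main obstacle I foresee is the bookkeeping of Clifford-coefficient ordering: $\overline{x}$ does not commute with any $e_k$, so the $e_k$'s must be carefully slid past $\overline{x}$, and the index relabeling from $\sum_{k}\sum_{i\neq k}$ to $\sum_{j<k}$ that produces the antisymmetric combination $e_je_kL_{jk}$ requires precise accounting of signs. Once this bivector identity is in place, the rest is straightforward algebraic cancellation, and the second identity $\overline{\Gamma_0}w=nw-w\Gamma_0$ follows line by line by running the identical argument with $\overline{x}\leftrightarrow x$, $D_0\leftrightarrow\overline{D_0}$, and $\Gamma_0\leftrightarrow\overline{\Gamma_0}$.
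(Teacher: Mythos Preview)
Your argument is correct. The factorization $\Gamma_0=\overline{x}D_0-E_r$, the Leibniz expansion of $D_0(\overline{x}f)$, the identification $\sum_{k}(e_k\overline{x}-\overline{x}e_k)\partial_{x_k}=2\sum_{j<k}e_je_kL_{jk}=-(\Gamma_0+\overline{\Gamma_0})$, and the final cancellation all check out; the second identity indeed follows by the parallel computation (or, even more cheaply, by applying the Clifford involution $a\mapsto\bar{a}$, which is an algebra homomorphism commuting with the real partial derivatives, directly to the first identity).

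As for comparison: the paper does not give its own argument here but simply refers to Theorem~3 of \cite{LR}. Your explicit derivation is therefore more informative than what the paper supplies, and it is exactly the kind of direct commutator computation one expects that reference to contain. The only cosmetic remark is that in your commutator formula $e_k\overline{x}-\overline{x}e_k=2\sum_{i\neq k}x_ie_ie_k$ you should make explicit that the index $i$ runs over $1,\dots,n$ (the $x_0$ contribution vanishes because $e_0$ is central), so that the subsequent reassembly into $\sum_{1\le j<k}e_je_kL_{jk}$ is unambiguous.
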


\begin{proof}
The proof is similar to Theorem 3 in \cite{LR}.
\end{proof}

\begin{theorem} \label{Dsw}
$$D_s\overline{w}=-w\overline{D_s},\ \overline{D_s}w=-\overline{w}D_s.$$
\end{theorem}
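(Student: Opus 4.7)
The plan is to feed the conformally invariant expressions $D_s=w(\Gamma_0-\tfrac{n}{2})$ and $\overline{D_s}=\overline{w}(\overline{\Gamma_0}-\tfrac{n}{2})$ into the two commutation relations supplied by the previous lemma and watch the right-hand sides collapse algebraically. Since the scalar $n/2$ commutes with every Clifford element and with the differential operators, all the work is concentrated in the $\Gamma_0\overline{w}$ and $\overline{\Gamma_0}w$ terms, which are exactly what the lemma controls.

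First I would expand
\begin{equation*}
D_s\overline{w}=w(\Gamma_0-\tfrac{n}{2})\overline{w}=w\Gamma_0\overline{w}-\tfrac{n}{2}w\overline{w},
\end{equation*}
interpreting $\overline{w}$ as the operator of (left) multiplication by $\overline{w}$ so that $\Gamma_0\overline{w}$ is an operator composition. Substituting the first identity of the lemma, $\Gamma_0\overline{w}=n\overline{w}-\overline{w}\overline{\Gamma_0}$, produces
\begin{equation*}
D_s\overline{w}=w\bigl(n\overline{w}-\overline{w}\overline{\Gamma_0}\bigr)-\tfrac{n}{2}w\overline{w}=\tfrac{n}{2}w\overline{w}-w\overline{w}\overline{\Gamma_0}=-w\overline{w}\bigl(\overline{\Gamma_0}-\tfrac{n}{2}\bigr),
\end{equation*}
and recognising the parenthesised factor as $\overline{D_s}$ (after stripping the leading $\overline{w}$) gives $D_s\overline{w}=-w\overline{D_s}$.

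The second identity is obtained by the mirror computation: expand $\overline{D_s}w=\overline{w}(\overline{\Gamma_0}-\tfrac{n}{2})w$, insert the lemma's second relation $\overline{\Gamma_0}w=nw-w\Gamma_0$, and collect terms to rewrite the middle factor as $-w(\Gamma_0-\tfrac{n}{2})$, yielding $\overline{D_s}w=-\overline{w}D_s$.

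The only conceptual pitfall I anticipate is bookkeeping: $w$ and $\overline{w}$ are Clifford-valued (not scalar), so the order of every factor must be preserved, and $\Gamma_0\overline{w}$ must be read as an operator composition rather than a pointwise product. Everything else is a one-line algebraic rearrangement, so I would spend a sentence at the start of the proof fixing this convention. No integration, spectral theory, or analytic estimate is needed; the result is purely a consequence of the lemma combined with the conformally invariant factorisations of $D_s$ and $\overline{D_s}$.
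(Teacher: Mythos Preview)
Your proposal is correct and follows exactly the route the paper intends: the paper's own proof reads ``Applying the last Lemma, a straight forward calculation completes the proof,'' and your write-up is precisely that straightforward calculation spelled out. Nothing is missing or different.
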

\begin{proof}
Applying the last Lemma, a straight forward calculation completes the proof.
\end{proof}

\begin{theorem}

Since $D_s$ and $\overline{D_s}$ are both conformally invariant, we have their fundamental solutions as follows:
$$D_sG_s(w-v)=D_s\frac{\overline{w-v}}{\|w-v\|^n}=\delta(v),$$
$$\overline{D_s}\overline{G_s(w-v)}=\overline{D_s}\frac{w-v}{\|w-v\|^n}=\delta(v),$$
where $w,v\in \SN$.
\end{theorem}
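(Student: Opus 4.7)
The plan is to derive these identities by exploiting the conformal invariance of $D_s$ to transport the known Euclidean fundamental solution to the sphere. Since $\SN$ is conformally equivalent to $\Rn$ via stereographic projection $\phi:\Rn\to\SN\setminus\{N\}$, and $D_s=w(\Gamma_0-\tfrac{n}{2})$ was constructed precisely as the conformally invariant lift of the $n$-dimensional Euclidean Dirac operator $D_x=\sum_{i=1}^n e_i\partial_{x_i}$ (whose fundamental solution is $\overline{x-y}/\|x-y\|^n$), the explicit shape of $G_s$ is forced by covariance rather than guessed.

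First I would record the classical stereographic identity
\begin{equation*}
\|\phi(x)-\phi(y)\|=\frac{2\,\|x-y\|}{\sqrt{1+\|x\|^2}\sqrt{1+\|y\|^2}}
\end{equation*}
together with the accompanying Clifford relation expressing $\overline{\phi(x)-\phi(y)}$ as a product of $\overline{x-y}$ with Clifford-valued conformal weights built from $1+\|x\|^2$ and $1+\|y\|^2$. Substituting these into $G_s(w-v)=\overline{w-v}/\|w-v\|^n$ with $w=\phi(x)$ and $v=\phi(y)$, the powers of $(1+\|x\|^2)$ and $(1+\|y\|^2)$ collapse to exactly the Jacobian weights required by the intertwining relation for $D_s$. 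Combined with the Euclidean identity $D_x\bigl(\overline{x-y}/\|x-y\|^n\bigr)=\delta(x-y)$ and the change-of-variables formula for $\delta$ under $\phi$, this produces $D_sG_s(w-v)=\delta(v)$.

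The second identity $\overline{D_s}\,\overline{G_s(w-v)}=\delta(v)$ then follows from the first by Clifford involution: since the involution $\overline{\phantom{a}}$ is an algebra homomorphism, one has $\overline{D_s f}=\overline{D_s}\,\bar f$ for any Clifford-valued function $f$, and $\overline{\delta(v)}=\delta(v)$ because $\delta$ is scalar-valued. Alternatively, Theorem~\ref{Dsw} provides $\overline{D_s}w=-\overline{w}D_s$, which gives a direct operator-level translation between the two identities.

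The main obstacle is the careful bookkeeping of the Clifford-valued cocycle factors arising from stereographic projection: one must verify that these combine with the scalar conformal factors so as to yield the kernel $\overline{w-v}/\|w-v\|^n$ cleanly, without stray constants, sign flips, or twists in the orientation-dependent Clifford numerator. Once this bookkeeping is carried out, no further analytic input is needed beyond the Euclidean fundamental-solution identity; the spherical identities follow by pure covariance.
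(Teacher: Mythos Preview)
Your proposal is correct and follows essentially the same route as the paper, which defers to Proposition~4 of \cite{LR}: there too the spherical fundamental solution is obtained by transporting the Euclidean kernel $\overline{x-y}/\|x-y\|^n$ through the Cayley (stereographic) transform and tracking the conformal cocycle factors that realize the covariance of $D_s$. Your derivation of the second identity by applying the Clifford involution (an algebra homomorphism) is a clean shortcut and is consistent with the paper's definitions of $\overline{D_s}$ and $\overline{G_s}$.
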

\begin{proof}
The proof is similar to Proposition 4 in \cite{LR}.
\end{proof}

Let $\Omega$ be a bounded smooth domain in ${\SN}$ and $f \in C^1(\Omega, \ClN)$,  we have the Cauchy transforms for both $D_s$ and $\overline{D_s}$,
\begin{eqnarray*}
T_\Omega f(w)=\int_\Omega G_s(w-v)f(v)dv=\int_\Omega \frac{\overline{w-v}}{\|w-v\|^n}f(v)dv,\\
\overline{T}_\Omega f(w)=\int_\Omega \overline{G_s(w-v)}f(v)dv=\int_\Omega \frac{w-v}{\|w-v\|^n}f(v)dv.
\end{eqnarray*}

Also, the non-singular boundary integral operators are given by
\begin{eqnarray*}
F_{\partial \Omega}f(w)=\int_{\partial \Omega}G_s(w-v)n(v)f(v)d\sigma(v),\\
\overline{F}_{\partial \Omega}f(w)=\int_{\partial \Omega}\overline{G_s(w-v)}n(v)f(v)d\sigma(v)
\end{eqnarray*}
Then we have Borel-Pompeiu Theorem as follows.

\begin{theorem}(\cite{LR})\textbf{(Borel-Pompeiu Theorem)}
\par
For $f \in C^1(\Omega)\cap C(\bar\Omega)$, we have
\begin{eqnarray*}
f(w)=\int_{\partial \Omega}G_s(w-v)n(v)f(v)d\sigma(v)+\int_\Omega G_s(w-v)D_sf(v)dv, \label{1}
\end{eqnarray*}
in other words, $f=F_{\partial \Omega}f+T_\Omega D_sf$. Similarly, $f=\overline{F}_{\partial \Omega}f+\overline{T}_\Omega \overline{D_s}f$
\begin{eqnarray*}
f(w)=\int_{\partial \Omega}\overline{G_s(w-v)}n(v)f(v)d\sigma(v)+\int_\Omega \overline{G_s(w-v)}\overline{D_s}f(v)dv, \label{1}
\end{eqnarray*}
If $f$ is a function with compact support, then $TD_s=\overline{TD_s}=I$.
\end{theorem}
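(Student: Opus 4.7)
The plan is to reduce the statement to an integration-by-parts argument on the sphere, following the classical template of the Euclidean Borel--Pompeiu theorem. The previous theorem already supplies the distributional identity $D_sG_s(w-v)=\delta$, so the essential analytic ingredient I need is a Clifford-algebra-valued Stokes identity on $\SN$: for smooth Clifford-valued $g,f$ on $\bar\Omega\subset\SN$,
$$\int_{\partial\Omega} g(v)\, n(v)\, f(v)\, d\sigma(v) = \int_\Omega \bigl[(g D_s)(v)\, f(v) + g(v)\,(D_s f)(v)\bigr]\, dv.$$
I would derive this by writing $D_s = w(\Gamma_0 - n/2)$, noting that the components of $\Gamma_0$ are tangential rotational vector fields on $\SN$, and integrating each by parts intrinsically; alternatively one can pull back the ambient Clifford Stokes formula along the radial extension from $\SN$ into $\RN$ and restrict to $r=1$.

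With this identity in hand, fix $w\in\Omega$ and choose $\epsilon>0$ small enough that the spherical cap $B_\epsilon(w)$ lies inside $\Omega$. Applying the Stokes formula with $g(v)=G_s(w-v)$ on $\Omega_\epsilon := \Omega\setminus\overline{B_\epsilon(w)}$, and using that $G_s(w-\cdot)$ is smooth and (right-)monogenic away from $w$, yields
$$\int_{\partial\Omega} G_s(w-v)\, n(v)\, f(v)\, d\sigma(v) - \int_{\partial B_\epsilon(w)} G_s(w-v)\, n_\epsilon(v)\, f(v)\, d\sigma(v) = \int_{\Omega_\epsilon} G_s(w-v)\, D_s f(v)\, dv,$$
where the sign on the inner boundary accounts for the outward normal of $\Omega_\epsilon$ pointing toward $w$. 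Letting $\epsilon\to 0^+$, the bulk integral converges to $\int_\Omega G_s(w-v)\, D_s f(v)\, dv$ by local integrability of the weakly singular kernel $G_s$; parameterizing $\partial B_\epsilon(w)$, Taylor-expanding $f$ around $w$, and using that near $w$ the sphere is Euclidean to leading order shows the inner boundary integral tends to $-f(w)$, giving the stated formula. The $\overline{D_s}$ version follows by the identical argument with $\overline{G_s}$ and $\overline{D_s}\,\overline{G_s}=\delta$. For compactly supported $f$, one takes $\Omega$ so that $\partial\Omega$ is disjoint from $\supp f$; the boundary term vanishes, giving $T_\Omega D_s f = f$, and likewise for the conjugate version.

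The main obstacle is the spherical Stokes identity of the first paragraph. The factor $w\in\SN$ and the zeroth-order term $-n/2$ in $D_s = w(\Gamma_0-n/2)$ must be handled carefully: moving $w$ across the tangential derivatives produces extra algebraic terms that need to combine with the $-n/2$ to yield a clean divergence-type identity, and one has to verify that the resulting boundary term genuinely involves the intrinsic spherical outward normal $n(v)$ rather than the ambient Euclidean one. Once this is in place, the remaining excision-and-limit calculation is a routine adaptation of the Euclidean argument, which is presumably why the authors dispatch the proof by appealing to the analogous statement in \cite{LR}.
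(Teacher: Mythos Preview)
Your approach is correct and is the standard one. Note, however, that the paper does not actually prove this theorem: it is stated with a citation to \cite{LR} and no proof is given in the present paper. So there is nothing to compare your argument against here beyond the fact that the excision-and-limit template you describe is exactly the method used in \cite{LR} (and in the Euclidean precursor, Theorem~1 above), and your identification of the spherical Clifford--Stokes identity as the one nontrivial ingredient is on point.
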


\par
Since the conformally invariant spherical Laplace operator $\Delta_s$ has the fundamental solution $H_s(w-v)=-\displaystyle\frac{1}{n-2}\frac{1}{\|w-v\|^{n-2}}$, see \cite{LR}. We have factorizations of $\Delta_s$ as follows.
\begin{theorem}
$\Delta_s=\overline{D_s}(D_s+w)=D_s(\overline{D_s}+\overline{w})$.
\end{theorem}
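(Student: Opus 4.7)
The plan is to collapse each factored expression into a scalar second-order operator in $\Gamma_0$ (respectively $\overline{\Gamma_0}$) by repeatedly sliding $w$ and $\overline{w}$ past the first-order pieces, and then to match the result against the conformally invariant spherical Laplacian $\Delta_s$. The ingredients are the conformally invariant representations $D_s = w(\Gamma_0 - n/2)$ and $\overline{D_s} = \overline{w}(\overline{\Gamma_0} - n/2)$, the Clifford identity $\overline{w} w = w\overline{w} = 1$ on $\SN$, the anticommutation relations for $\Gamma_0$ and $\overline{\Gamma_0}$ from the preceding Lemma, and the intertwining relations of Theorem~\ref{Dsw}.

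First I would use Theorem~\ref{Dsw} to absorb the zero-order terms into the Dirac factors. Since $\overline{D_s} w = -\overline{w} D_s$,
\[
\overline{D_s}(D_s + w) = \overline{D_s} D_s - \overline{w} D_s = (\overline{D_s} - \overline{w}) D_s,
\]
and symmetrically $D_s(\overline{D_s} + \overline{w}) = (D_s - w)\overline{D_s}$. For the first factorization, substituting the explicit forms of $D_s$ and $\overline{D_s}$ gives $\overline{w}\bigl(\overline{\Gamma_0} - (n+2)/2\bigr)\, w\, (\Gamma_0 - n/2)$. The Lemma identity $\overline{\Gamma_0} w = nw - w\Gamma_0$ then pushes $w$ to the left, producing
\[
\bigl(\overline{\Gamma_0} - \tfrac{n+2}{2}\bigr) w = -w\bigl(\Gamma_0 - \tfrac{n-2}{2}\bigr),
\]
so together with $\overline{w} w = 1$ the whole expression collapses to $-\bigl(\Gamma_0 - \tfrac{n-2}{2}\bigr)\bigl(\Gamma_0 - \tfrac{n}{2}\bigr)$. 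The parallel computation for $(D_s - w)\overline{D_s}$, using instead $\Gamma_0 \overline{w} = n\overline{w} - \overline{w}\overline{\Gamma_0}$, produces $-\bigl(\overline{\Gamma_0} - \tfrac{n-2}{2}\bigr)\bigl(\overline{\Gamma_0} - \tfrac{n}{2}\bigr)$.

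The remaining step is to identify both second-order expressions with $\Delta_s$. I would do this on the spherical monogenic subspaces, on which $\Gamma_0$ and $\overline{\Gamma_0}$ diagonalize, and check that both polynomials yield the eigenvalues of the Yamabe-type operator on $\SN$ whose fundamental solution is $H_s$. The main obstacle is precisely this spectral reconciliation: the two answers are formally different, one in $\Gamma_0$ and one in $\overline{\Gamma_0}$, so showing they agree and equal $\Delta_s$ requires the eigenvalue data of both operators on the inner and outer spherical monogenic spaces. A cleaner alternative, which I would try first, is to verify that both operators send $H_s(w - v)$ to $\delta(v)$ by combining the known identities $D_s G_s = \delta$ and $\overline{D_s}\overline{G_s} = \delta$ with the fact that, up to a scalar factor, $(D_s + w)H_s$ and $(\overline{D_s} + \overline{w})H_s$ are $\overline{G_s}$ and $G_s$ respectively.
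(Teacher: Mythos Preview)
Your proposal is correct and matches the paper's approach: the paper defers the proof to Proposition~5 of \cite{LR}, but the exact algebraic collapse you perform---reducing each factorization to $-\Gamma_0^{2}+(n-1)\Gamma_0-(\tfrac{n^{2}}{4}-\tfrac{n}{2})$ (respectively the $\overline{\Gamma_0}$ version) via Lemma~1 and Theorem~\ref{Dsw}---is reproduced verbatim later in Section~5 when the paper computes $\sigma(\Delta_s)$. Your proposed identification of the result with $\Delta_s$ through the fundamental solution $H_s$ is precisely the content of the cited proposition in \cite{LR}.
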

\begin{proof}
The proof is similar to Proposition 5 in \cite{LR}.
\end{proof}

We also have the dual of $D_s$ as follows.

\begin{theorem}
$D_s^*=-\overline{D_s}.$
\end{theorem}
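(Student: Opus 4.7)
The plan is to pass from a Borel--Pompeiu representation of the test functions to an adjoint pair involving the two spherical Cauchy transforms, then deduce $D_s^{\,*} = -\overline{D_s}$ on $L^2(\Omega,\ClN)$ by density. I would work on test functions $u,v \in C_0^\infty(\Omega, \ClN)$ with support compactly contained in $\Omega \subset \SN$ so that Borel--Pompeiu collapses to $u = T_\Omega D_s u$ and $v = \overline{T}_\Omega\,\overline{D_s}\,v$, with no boundary contribution from $F_{\partial\Omega}$ or $\overline{F}_{\partial\Omega}$.

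The key preliminary step is to establish the duality $T_\Omega^{\,*} = -\overline{T}_\Omega$. Expanding
$$(T_\Omega f, g) = \int_\Omega\!\int_\Omega \overline{G_s(w-v)}\,\overline{f(v)}\, g(w)\, dS(v)\, dS(w),$$
switching the order of integration by Fubini, and exploiting the symmetry
$$\overline{G_s(w-v)} = \frac{w-v}{\|w-v\|^n} = -\overline{G_s(v-w)}$$
(which is odd under the interchange $v \leftrightarrow w$), one rearranges the iterated integral into $-(f, \overline{T}_\Omega g)$. This yields $T_\Omega^{\,*} = -\overline{T}_\Omega$; taking adjoints once more gives $\overline{T}_\Omega^{\,*} = -T_\Omega$.

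Plugging the Borel--Pompeiu representation of $v$ into the inner product and invoking this Cauchy-transform duality yields the chain
$$(D_s u, v) = (D_s u, \overline{T}_\Omega\,\overline{D_s}\,v) = (\overline{T}_\Omega^{\,*} D_s u, \overline{D_s}\,v) = -(T_\Omega D_s u, \overline{D_s}\,v) = -(u, \overline{D_s}\,v) = (u, -\overline{D_s}\,v),$$
where the second-to-last equality uses Borel--Pompeiu for $u$. Since $C_0^\infty(\Omega,\ClN)$ is dense in the $L^2$-domain of $D_s$, the adjoint relation extends to give $D_s^{\,*} = -\overline{D_s}$.

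The hard part will be the duality calculation in the first step: the Clifford algebra is non-commutative, so the factor $\overline{f(v)}$ sits wedged between $\overline{G_s(w-v)}$ and $g(w)$ in the iterated integral, and one must use the odd-symmetry of the kernel under $v\leftrightarrow w$ (rather than simple commutativity of scalars) to reassemble the right side as $-(f,\overline{T}_\Omega g)$ with the correct placement of Clifford factors. Once that algebraic bookkeeping is settled, the Borel--Pompeiu chain is essentially automatic and delivers the stated formula.
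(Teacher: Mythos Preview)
Your approach is sound but takes a genuinely different route from the paper. The paper's argument is purely algebraic and local: write $D_s = w(\Gamma_0 - \tfrac{n}{2})$, move the multiplication by $w$ across the pairing as $\overline{w}$, use that $\Gamma_0$ is symmetric on compactly supported functions, and then invoke Lemma~1 ($\Gamma_0\overline{w}=n\overline{w}-\overline{w}\,\overline{\Gamma_0}$) to regroup the right slot into $-\overline{w}(\overline{\Gamma_0}-\tfrac{n}{2})=-\overline{D_s}$. No integral operators, no Borel--Pompeiu. Your route instead encodes the identity through the Cauchy kernels: $T_\Omega^{\,*}=-\overline{T}_\Omega$ is read off from the odd symmetry of $G_s$, and then Borel--Pompeiu on each argument collapses $(D_s u,v)$ to $-(u,\overline{D_s}v)$. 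The paper's method is shorter and uses only the commutation lemma it has already set up; yours is more structural and would transfer to any setting where a Dirac-type operator has a fundamental solution whose conjugate serves the conjugate operator.

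One caution on what you flagged as the ``hard part'': the ordering $\overline{G_s(w-v)}\,\overline{f(v)}$ you wrote treats the bar as an automorphism, and under that convention the non-commutativity obstruction is genuine and is \emph{not} resolved by the odd symmetry of the kernel --- after the $v\leftrightarrow w$ swap you would still need to commute $\overline{G_s}$ past $\overline{f}$. The pairing the paper actually uses (witness the step $\langle w\psi,g\rangle=\langle\psi,\overline{w}g\rangle$ in its own proof of this theorem) has the conjugation acting as an \emph{anti}-automorphism, so that $\overline{G_s(w-v) f(v)}=\overline{f(v)}\,\overline{G_s(w-v)}$; with that convention the factors already land in the right order after the swap, and $T_\Omega^{\,*}=-\overline{T}_\Omega$ drops out with no residual commutation to justify. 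Once you adopt that convention, your chain is clean.
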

\begin{proof}
Let $f,g:\Omega \rightarrow \Cln$ both have compact supports,
\begin{eqnarray*}
&&<D_sf,g>=<w(\Gamma_0-\frac{n}{2})f,g>=<(\Gamma_0-\frac{n}{2})f,\overline{w}g>\\
&=&<\Gamma_0 f,\overline{w}g>-\frac{n}{2}<f,\overline{w}g>=<f,\Gamma_0\overline{w}g>-\frac{n}{2}<f,\overline{w}g>\\
&=&<f,(n\overline{\omega}-\overline{\omega}\overline{\Gamma_0})g>-\frac{n}{2}<f,\overline{w}g>=<f, -\overline{w}(\overline{\Gamma_0}-\frac{n}{2})g>\\
&=&<f,-\overline{D_s}g>.
\end{eqnarray*}
\end{proof}

\begin{definition}
Define the generalized spherical $\Pi$-type operator as $$\Pi_{s,0}f=(\overline{D_s+w})Tf.$$
\end{definition}
We have some properties of $\Pi_{s,0}$ as follows.
\begin{proposition}
\begin{eqnarray*}
&&D_s\Pi_{s,0}=\overline{D_s-w},\\
&&\Pi_{s,0}D_s=\overline{D_s+w}.
\end{eqnarray*}
\end{proposition}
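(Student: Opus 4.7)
The plan is to attack the two identities separately, relying on (a) the factorization $\Delta_s = D_s(\overline{D_s}+\overline{w}) = \overline{D_s}(D_s+w)$ established just above, (b) the commutation rules $D_s\overline{w} = -w\overline{D_s}$ and $\overline{D_s}w = -\overline{w}D_s$ from Theorem \ref{Dsw}, and (c) the Cauchy-transform relations $D_s T = I$ (from $D_s G_s = \delta$) together with the Borel–Pompeiu identity $T D_s = I$ for functions with compact support inside $\Omega$.

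For the first identity, I would unwind the definition $\Pi_{s,0} = (\overline{D_s}+\overline{w})T$ and apply $D_s$ from the left, so that
$$D_s \Pi_{s,0} = D_s(\overline{D_s}+\overline{w})T = \Delta_s T,$$
by the factorization $\Delta_s = D_s(\overline{D_s}+\overline{w})$. Then I would switch to the other factorization $\Delta_s = \overline{D_s}(D_s+w)$, obtaining
$$\Delta_s T = \overline{D_s}(D_s+w)T = \overline{D_s}\bigl(D_s T + wT\bigr) = \overline{D_s} + \overline{D_s}wT.$$
Finally, the commutation rule $\overline{D_s}w = -\overline{w}D_s$ and $D_s T = I$ turn this into $\overline{D_s} - \overline{w}\,D_s T = \overline{D_s} - \overline{w} = \overline{D_s - w}$, which is exactly the first claim.

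For the second identity, I would instead apply $D_s$ from the right of $T$. Writing $\Pi_{s,0}D_s = (\overline{D_s}+\overline{w})T D_s$ and invoking the Borel–Pompeiu identity $T D_s = I$ on compactly supported test functions gives $(\overline{D_s}+\overline{w})f = \overline{(D_s+w)}f = \overline{D_s + w}\,f$, finishing the proof.

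The main obstacle, as usual when the boundary term in Borel–Pompeiu is not zero, is the precise function space on which these identities hold: the step $T D_s = I$ requires either compactly supported functions or $f\in\wzwo$, mirroring the Euclidean situation where the analogous identity picks up a term $\overline{D_0}F_{\partial\Omega}f$. Apart from this functional-analytic caveat, everything reduces to careful bookkeeping of Clifford conjugates and the two factorizations of $\Delta_s$, which is essentially routine once the commutation rules of Theorem \ref{Dsw} are in hand.
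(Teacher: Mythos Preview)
Your proof is correct and follows essentially the same route as the paper: for the first identity the paper writes the single intertwining step $D_s(\overline{D_s+w})=(\overline{D_s-w})D_s$ and then applies $D_sT=I$, which is exactly your argument routed through $\Delta_s$ (both factorizations of $\Delta_s$ plus the commutation $\overline{D_s}w=-\overline{w}D_s$ are precisely what make that intertwining hold); for the second identity both you and the paper simply invoke $TD_s=I$ on compactly supported functions. Your caveat about the boundary term when $TD_s\neq I$ is appropriate and matches the paper's standing hypothesis.
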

\begin{proof}
\begin{eqnarray*}
&&D_s\Pi_{s,0}=D_s(\overline{D_s+w})T=(\overline{D_s-w})D_sT=\overline{D_s-w},\\
&&\Pi_{s,0}D_s=(\overline{D_s+w})TD_s=\overline{D_s+w}.
\end{eqnarray*}
\end{proof}

From the proposition above, we can have decompositions of $L^2(\Omega,\Cln)$ as follows.
\begin{theorem}
$$L^2(\Omega,\Cln)=L^2(\Omega,\ClN)\bigcap Ker (\overline{D_s-w})\bigoplus D_s(\wzwo(\Omega, \ClN)),$$
$$L^2(\Omega,\Cln)=L^2(\Omega,\ClN)\bigcap Ker D_s\bigoplus(\overline{D_s+w})(\wzwo(\Omega, \ClN)).$$
\end{theorem}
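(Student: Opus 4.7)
The plan is to mimic the Hodge-type proof of the Euclidean decomposition (Theorem~3 of the excerpt), driven by two factorizations of the spherical Laplacian $\Delta_s$. The factorization $\Delta_s=D_s(\overline{D_s}+\overline{w})$ is already stated. To obtain the companion identity I would expand
\begin{equation*}
\Delta_s=\overline{D_s}(D_s+w)=\overline{D_s}D_s+\overline{D_s}w=\overline{D_s}D_s-\overline{w}D_s=(\overline{D_s}-\overline{w})D_s,
\end{equation*}
using the relation $\overline{D_s}w=-\overline{w}D_s$ from Theorem~\ref{Dsw}. These two factorizations are the linchpin of the two decompositions.

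For the first decomposition, trivial intersection is immediate: if $g=D_s\phi\in\Ker(\overline{D_s-w})$ with $\phi\in\wzwo(\Omega,\ClN)$, then $0=(\overline{D_s}-\overline{w})D_s\phi=\Delta_s\phi$, and uniqueness of the Dirichlet problem on $\wzwo$ forces $\phi=0$, so $g=0$. For spanning, given $f\in L^2(\Omega,\ClN)$ I would solve the Dirichlet problem $\Delta_s\phi=(\overline{D_s}-\overline{w})f$ weakly for $\phi\in\wzwo(\Omega,\ClN)$, and set $g:=f-D_s\phi$. By construction $(\overline{D_s}-\overline{w})g=(\overline{D_s}-\overline{w})f-\Delta_s\phi=0$, so $g\in L^2\cap\Ker(\overline{D_s-w})$, yielding the desired splitting $f=g+D_s\phi$.

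The second decomposition is exactly parallel, with the roles of the two factorizations exchanged: given $f\in L^2$, solve $\Delta_s\phi=D_sf$ in $\wzwo$ and put $g:=f-(\overline{D_s}+\overline{w})\phi$; then $D_sg=D_sf-D_s(\overline{D_s}+\overline{w})\phi=D_sf-\Delta_s\phi=0$, and trivial intersection again follows from uniqueness of the Dirichlet problem applied to $D_s(\overline{D_s}+\overline{w})\phi=\Delta_s\phi=0$.

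The main technical obstacle is the solvability of the Dirichlet problem $\Delta_s\phi=h$ in $\wzwo(\Omega,\ClN)$ for the distributional right-hand sides $D_sf$ and $(\overline{D_s}-\overline{w})f$ with $f\in L^2$, which a priori only lie in $W^{-1}_2$. This is the spherical counterpart of the classical weak $W^1_2$-theory for the Laplacian and should be handled by Lax-Milgram together with a Poincar\'e-type inequality on $\Omega\subset\SN$; the multiplication by $\overline{w}$ is a bounded zero-order perturbation and does not destroy coercivity. Once this well-posedness is in hand, the remainder of the argument reduces to the bookkeeping with the two factorizations indicated above.
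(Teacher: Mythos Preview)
Your proposal is correct and follows essentially the same route as the paper. The paper itself gives no explicit proof of this theorem but simply defers to \cite{GK} (``The proof is similar to Theorem~1 in \cite{GK}''); the argument you have reconstructed---factorize $\Delta_s$ in the two ways $\Delta_s=(\overline{D_s}-\overline{w})D_s=D_s(\overline{D_s}+\overline{w})$, invoke well-posedness of the Dirichlet problem for $\Delta_s$ on $\wzwo(\Omega,\ClN)$, and read off the splitting and trivial intersection---is precisely the Hodge-type decomposition proof of \cite{GK} transported to the sphere, with the factorization identities supplied by the paper's Theorem~\ref{Dsw} and the Laplacian factorization theorem.
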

Notice that
\begin{eqnarray*}
&&\Pi_{s,0}(L^2(\Omega,\ClN)\bigcap Ker  (\overline{D_s-w})=L^2(\Omega,\ClN)\bigcap Ker D_s,\\
&&\Pi_{s,0}D_s(\wzwo(\Omega, \ClN))=(\overline{D_s+w})(\wzwo(\Omega, \ClN)).
\end{eqnarray*}
Hence, $\Pi_{s,0}$ operator is from $L^2(\Omega,\ClN)$ to $L^2(\Omega,\ClN)$. The proof is similar to Theorem 1 in \cite{GK}.\\

\begin{definition}
We define the $\Pi_s^+$ operator as $$\Pi_s^+f=\overline{D_s}T^+f,$$
where $T^+f=\displaystyle\int_\Omega G^+(w-v)f(v)dv$,
$$G^+(w-v)=G_s(w-v)+wH_s(w-v)-2G_s^{(3)}(w-v),$$
and
$$G_s^{(3)}(w-v)=\displaystyle\frac{1}{(n-2)(n-4)}\frac{\overline{w-v}}{\|w-v\|^{n-4}}.$$
\end{definition}
Notice that $G_s^{(3)}(w-v)$ is actually the reproducing kernel of $D_s^{(3)}=(D_s-w)\overline{D_s}(D_s+w)$ and the proof is similar to a proof in \cite{LR}.

\begin{proposition}
\begin{eqnarray*}
\Pi_{s,0}(L^2(\Omega,\ClN)\bigcap Ker D_s)&=&L^2(\Omega,\ClN)\bigcap Ker  (\overline{D_s-w}),\\
\Pi_{s,0}(\overline{D_s+w})(\wzwo(\Omega, \ClN))&=&D_s(\wzwo(\Omega,\ClN)).
\end{eqnarray*}
\end{proposition}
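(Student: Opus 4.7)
The plan is to follow the template of the preceding claim (which established $\Pi_{s,0}(L^2\cap\Ker(\overline{D_s-w}))=L^2\cap\Ker D_s$ and $\Pi_{s,0}(D_s(\wzwo))=(\overline{D_s+w})(\wzwo)$), proving the two equalities via two inclusions each. The inputs are the operator identities $D_s\Pi_{s,0}=\overline{D_s-w}$ and $\Pi_{s,0}D_s=\overline{D_s+w}$ from the preceding proposition, the factorizations $\Delta_s=\overline{D_s}(D_s+w)=D_s(\overline{D_s}+\bar w)$, Borel--Pompeiu in the forms $D_sT=I$ and $TD_s=I$ on $\wzwo$, and the $L^2$-decompositions already in hand.

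For the inclusion $\Pi_{s,0}((\overline{D_s+w})(\wzwo))\subseteq D_s(\wzwo)$, take $g\in\wzwo$ and set $\psi=T(\overline{D_s+w})g$, so that $D_s\psi=(\overline{D_s+w})g$ by $D_sT=I$; then $\Pi_{s,0}((\overline{D_s+w})g)=(\overline{D_s+w})\psi$. Setting $\varphi=T(\overline{D_s+w})\psi$, and checking $\varphi\in\wzwo$ via mapping properties of $T$ and the compact support of $g$, gives $(\overline{D_s+w})\psi=D_s\varphi$, placing the image in $D_s(\wzwo)$. For the inclusion $\Pi_{s,0}(L^2\cap\Ker D_s)\subseteq L^2\cap\Ker(\overline{D_s-w})$, given $f\in L^2\cap\Ker D_s$, one computes $(\overline{D_s-w})\Pi_{s,0}f=(\overline{D_s-w})(\overline{D_s+w})Tf$. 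The Clifford anti-involution yields $(\overline{D_s-w})(\overline{D_s+w})=\overline{(D_s+w)(D_s-w)}$, which is then expanded using Theorem~\ref{Dsw} and the commutation identities of Lemma~1; applying $D_sTf=f$ together with the hypothesis $D_sf=0$ collapses the expression to zero.

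The two ``$\supseteq$'' inclusions follow together by a single decomposition argument. The operator $\Pi_{s,0}$ is surjective onto $L^2$ (since the preceding claim gives $\Pi_{s,0}(A)=C$ and $\Pi_{s,0}(B)=D$, with $L^2=A\oplus B=C\oplus D$ in the obvious shorthand), so given any $a\in L^2\cap\Ker(\overline{D_s-w})$ or $D_sh\in D_s(\wzwo)$, pick a preimage $b\in L^2$ and decompose $b=b_0+b_1$ along $L^2=(L^2\cap\Ker D_s)\oplus(\overline{D_s+w})(\wzwo)$. The two ``$\subseteq$'' inclusions just proved place $\Pi_{s,0}b_0\in L^2\cap\Ker(\overline{D_s-w})$ and $\Pi_{s,0}b_1\in D_s(\wzwo)$; uniqueness of the decomposition $L^2=(L^2\cap\Ker(\overline{D_s-w}))\oplus D_s(\wzwo)$ then identifies $a$ with $\Pi_{s,0}b_0$ in the first case (preimage in $\Ker D_s$) and $D_sh$ with $\Pi_{s,0}b_1$ in the second (preimage in $(\overline{D_s+w})(\wzwo)$).

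The main obstacle is the algebraic collapse of $(\overline{D_s-w})(\overline{D_s+w})Tf$ to zero on $\Ker D_s$: this product is not one of the four standard factorizations of $\Delta_s$ and has to be reduced by hand, using $D_sw=w\overline{D_s}$ from Theorem~\ref{Dsw} (with the convention $\bar w=-w$ for vectors), the commutation Lemma, and the factorizations of $\Delta_s$, in a way that places a factor of $D_s$ immediately to the left of $Tf$ so that $D_sTf=f$ and $D_sf=0$ annihilate the result. Once this identity is in place, the remainder of the proposition reduces to the decomposition bookkeeping outlined above.
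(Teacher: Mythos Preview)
The paper states this proposition without proof (the preceding dual claim is justified only by ``the proof is similar to Theorem~1 in \cite{GK}'', and the present proposition carries no justification at all), so there is no argument in the paper to compare against. On its own terms your architecture is reasonable: the decomposition bookkeeping for the two ``$\supseteq$'' inclusions is correct once the two ``$\subseteq$'' inclusions are established, and the inclusion $\Pi_{s,0}\big((\overline{D_s+w})(\wzwo)\big)\subseteq D_s(\wzwo)$ does follow from $\Pi_{s,0}D_s=\overline{D_s+w}$ and $D_sT=I$, modulo the verification that your auxiliary function $\varphi=T(\overline{D_s+w})\psi$ actually lands in $\wzwo$ rather than merely in $W^1_2$ --- this is not automatic from ``mapping properties of $T$'' and needs a trace/projector argument as in \cite{GK}.

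The genuine gap is the first ``$\subseteq$'' inclusion. You rightly flag the collapse of $(\overline{D_s-w})(\overline{D_s+w})Tf$ on $\Ker D_s$ as the main obstacle, but you do not carry it out, and the sketch contains two errors. First, the Clifford involution in this paper is an \emph{automorphism} (it is the composite of two anti-automorphisms), so $(\overline{D_s-w})(\overline{D_s+w})=\overline{(D_s-w)(D_s+w)}$, not $\overline{(D_s+w)(D_s-w)}$. Second, the ``convention $\bar w=-w$'' you invoke is false here: $w\in\SN\subset\RN$ is a paravector with nonzero scalar part, so $\bar w\neq -w$, and Theorem~\ref{Dsw} reads $D_s\bar w=-w\overline{D_s}$, not $D_sw=w\overline{D_s}$. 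Neither $(D_s-w)(D_s+w)$ nor its involution is among the listed factorizations of $\Delta_s$, and Theorem~\ref{Dsw} supplies $\overline{D_s}w$ and $D_s\bar w$ but not $\overline{D_s}\,\bar w$, so the promised reduction does not yet go through. Until an explicit identity establishing $(\overline{D_s}-\bar w)(\overline{D_s}+\bar w)Tf=0$ for $f\in\Ker D_s$ is written down, the proof is incomplete.
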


\begin{theorem}
$\Pi_s$ is an isometry on $\wzwo(\Omega, \ClN)$ up to isomorphism. \label{isom up}
\end{theorem}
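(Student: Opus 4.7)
The plan is to show that $\Pi_{s,0}^*\Pi_{s,0}$ is a bounded, invertible (but generally non-identity) operator on $\wzwo(\Omega,\ClN)$. This yields the ``isometry up to isomorphism'' interpretation: via polar decomposition, $\Pi_{s,0}$ factors as the composition of an isometry with $(\Pi_{s,0}^*\Pi_{s,0})^{1/2}$; equivalently, $\wzwo$ carries the equivalent inner product $(f,g)' := (\Pi_{s,0}f,\Pi_{s,0}g)_{L^2}$ under which $\Pi_{s,0}$ is a genuine $L^2$-isometry.

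First I would assemble the needed adjoint identities: $D_s^* = -\overline{D_s}$ (just proved in the preceding theorem), $w^* = \overline{w}$ (immediate from the definition of the sesquilinear form $(u,v) = \int \overline{u}v$), and $T^* = -\overline{T}$ (verified by swapping integration variables and using $G_s(w-v) = -G_s(v-w)$). These yield $\Pi_{s,0}^* = \overline{T}(D_s-w)$. For the composition I would expand using $D_s\overline{w} = -w\overline{D_s}$ from Theorem \ref{Dsw} and $w\overline{w}=1$ on $\SN$, then apply the factorization $\Delta_s = D_s(\overline{D_s}+\overline{w})$ to obtain
$$(D_s-w)(\overline{D_s}+\overline{w}) = D_s\overline{D_s} - 2w\overline{D_s} - 1 = \Delta_s - w\overline{D_s} - 1.$$
Substituting, using the second factorization $\Delta_s = \overline{D_s}(D_s+w)$, and invoking the Borel-Pompeiu identities $TD_s = \overline{T}\overline{D_s} = I$ on $\wzwo$ together with $D_sT = \overline{D_s}\overline{T} = I$ from the fundamental solutions, one reduces the leading term $\overline{T}\Delta_s Tf$ to $f$ plus lower-order contributions, so that $\Pi_{s,0}^*\Pi_{s,0} = I + \mathcal{K}$ for a remainder operator $\mathcal{K}$ assembled from $T$, $\overline{T}$, multiplication by $w$, and the boundary operator $\overline{F}_{\partial\Omega}$.

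The final step is to show that $\mathcal{K}\colon \wzwo \to \wzwo$ is compact: each summand either contains two factors of $T$ or $\overline{T}$ (each raising Sobolev regularity by one order), or combines one such factor with a boundary integral whose image is smooth on $\Omega$, so every piece factors through a Rellich-Kondrachov embedding. Thus $I+\mathcal{K}$ is Fredholm of index zero, and its injectivity (inherited from the bijectivity of $\Pi_{s,0}$ on $L^2$ established in the preceding propositions, via $\Pi_{s,0}^*\Pi_{s,0}f = 0 \Rightarrow \|\Pi_{s,0}f\|^2 = 0 \Rightarrow f = 0$) promotes it to an isomorphism. The main obstacle is the careful bookkeeping of boundary contributions: the intermediate function $wTf$ need not vanish on $\partial\Omega$ even for $f \in \wzwo$, so applying Borel-Pompeiu to it introduces a non-zero $\overline{F}_{\partial\Omega}(wTf)$ term that must be absorbed into $\mathcal{K}$ and shown to contribute only a compact piece, which is where the regularity of $\partial\Omega$ and the smoothness of the kernel $G_s$ off the diagonal are essential.
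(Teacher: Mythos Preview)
Your approach is genuinely different from the paper's. The paper does not compute $\Pi_{s,0}^{*}\Pi_{s,0}$ at all and never invokes Fredholm or compactness arguments. Instead, it introduces an explicit companion operator $\Pi_s^{+}=\overline{D_s}\,T^{+}$, where $T^{+}$ is the integral operator with kernel
\[
G^{+}(w-v)=G_s(w-v)+wH_s(w-v)-2G_s^{(3)}(w-v),
\]
$H_s$ being the fundamental solution of $\Delta_s$ and $G_s^{(3)}$ that of $(D_s-w)\overline{D_s}(D_s+w)$. The whole proof is then the five-line chain
\[
\langle \Pi_s f,\Pi_s^{+}g\rangle
=\langle Tf,(-D_s+w)\overline{D_s}T^{+}g\rangle
=-\langle Tf,\overline{D_s}(D_s+w)T^{+}g\rangle
=\langle D_sTf,(D_s+w)T^{+}g\rangle
=\langle f,g\rangle,
\]
using only $D_s^{*}=-\overline{D_s}$, the commutation $(D_s-w)\overline{D_s}=\overline{D_s}(D_s+w)$, and the fact that $T^{+}$ is built precisely so that $(D_s+w)T^{+}=I$. ``Isometry up to isomorphism'' here means the existence of this dual operator $\Pi_s^{+}$ realising $(\Pi_s^{+})^{*}\Pi_s=I$, not a polar-decomposition statement.

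Your route via $\Pi_{s,0}^{*}\Pi_{s,0}=I+\mathcal{K}$ is a reasonable alternative interpretation, and your adjoint computations and the identity $(D_s-w)(\overline{D_s}+\overline{w})=\Delta_s-w\overline{D_s}-1$ are correct. What it buys is a statement phrased purely in terms of $\Pi_{s,0}$ itself, without having to guess the kernel $G^{+}$. What it costs is substantial extra work you have only sketched: you must check that each piece of $\mathcal{K}$ actually lands in $\wzwo$ (e.g.\ $wTf$ and $\overline{T}Tf$ have no reason to have vanishing trace), and that the boundary contribution $\overline{F}_{\partial\Omega}(wTf)$ is compact as a map \emph{into} $\wzwo$ rather than merely into $L^{2}$. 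The paper sidesteps all of this by moving the burden onto the construction of $T^{+}$.
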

\begin{proof}
Let $f\in L^2(\Omega,\ClN)$, then
\begin{eqnarray*}
\langle\Pi_s f,\Pi_s^+ g\rangle&=&\langle (\overline{D_s}+\overline{w})Tf, \overline{D_s}T^+g \rangle\\
&=&\langle Tf, (-D_s+w)\overline{D_s}T^+ g \rangle\\
&=&-\langle Tf, (D_s-w)\overline{D_s} T^+ g \rangle\\
&=&-\langle Tf, \overline{D_s}(D_s+w)T^+ g \rangle\\
&=&\langle D_sTf,  (D_s+w)T^+ g \rangle=\langle f,g \rangle.
\end{eqnarray*}
\end{proof}

\subsection{Application of $\Pi_{s,0}$ to the solution of a Beltrami equation}
We have a Beltrami equation related to $\Pi_{s,0}$ as follows.
Let $\Omega \subseteq \Sn$ be a bounded, simply connected domain with sufficiently smooth boundary, $q: \Omega\longrightarrow \Cln$ a measurable function. Let $f: \Omega\longrightarrow \Cln$ be a sufficiently smooth function. The spherical Beltrami equation is as follows:
$$ D_sf=q(\overline{D_s+w})f.$$
It has a unique solution $f=\phi+Th$ where $\phi$ ia an arbitrary left-monogenic function such that $D_sf=0$ and $h$ is the solution of an  integral equation
$$h=q\big( (\overline{D_s+w})\phi+\Pi_{s,0} h\big).$$
By the Banach fixed point theorem, the previous integral equation has a unique solution in the case where
$$\|q\|\leq q_0< \frac{1}{\|\Pi_{s,0}\|},$$
with $q_0$ being a constant. Hence, for the rest of this section, we will estimate the $L^p$ norm of $\Pi_{s,0}$ with $p>1$.\\
\par
Since $\overline{D_s}=\overline{w}(\overline{\Gamma}-\frac{n}{2})=\overline{w}(w\overline{D_0}-E_r-\frac{n}{2})=\overline{D_0}-w E_r-\frac{n}{2}\overline{w}$, then
$$\Pi_{s,0} f(w)=\overline{(D_s+w)}Tf(w)=(\overline{D}T+\overline{w}(1-E_w)T-\frac{n}{2}T)f(w).$$
it is easy to see that
\begin{eqnarray*}
\frac{\partial}{\partial w_j}\int_{\SN}\frac{\overline{w-v}}{\|w-v\|^n}f(v)dv=
\int_{\SN} \frac{\overline{e_j}-n(w_j-v_j)\frac{\overline{w-v}}{\|w-v\|^2}}{\|w-v\|^n}f(v)dv+\omega_n\frac{\overline{e_j}}{n}f(v),
\end{eqnarray*}
since
\begin{eqnarray*}
\frac{\partial}{\partial w_j}\frac{\overline{w-v}}{\|w-v\|^n}=\frac{\overline{e_j}-n(w_j-v_j)\frac{\overline{w-v}}{\|w-v\|^2}}{\|w-v\|^n},
\end{eqnarray*}
and using Chapter IX $\S$ 7 in \cite{S}
\begin{eqnarray*}
\int_{S}\frac{\overline{w-v}}{\|w-v\|} \cos(r,w_j)dS=\omega_n\frac{\overline{e_j}}{n},
\end{eqnarray*}
where $S$ is a sufficiently small neighborhood of $w$ on $\SN$.\\
Hence, we have
\begin{eqnarray*}
\overline{D}Tf(w)&=&
\frac{1}{\omega_n}\int_{\SN} \frac{\sum \overline{e_j}^2-n\sum(w_j-v_j)\overline{e_j}\frac{\overline{w-v}}{\|w-v\|^2}}{\|w-v\|^n}f(v)dv+ \frac{\sum \overline{e_j}^2}{n}f(v)\\
&=&\frac{1}{\omega_n}\int_{\SN} \frac{(1-n)-n\frac{\overline{w-v}^2}{\|w-v\|^2}}{\|w-v\|^n}f(v)dv+\frac{1-n}{n}f(v)\\
E_wTf(w)&=&\frac{1}{\omega_n}\int_{\SN} \frac{\sum w_j\overline{e_j}-n\sum w_j(w_j-v_j)\frac{\overline{w-v}}{\|w-v\|^2}}{\|w-v\|^n}f(v)dv+ \frac{\sum w_j\overline{e_j}}{n}f(v)\\
&=&\frac{1}{\omega_n}\int_{\SN} \frac{\overline{w}-n<w,w-v>\frac{\overline{w-v}}{\|w-v\|^2}}{\|w-v\|^n}f(v)dv+\frac{\overline{w}}{n}f(v).
\end{eqnarray*}
Therefore, we have an integral expression of $\Pi_{s,0}$ as follows.
\begin{theorem}
\begin{eqnarray*}
\Pi_{s,0} f(w)&=&(\overline{D}T+\overline{w}(1-E_w)T-\frac{n}{2}T)f(w)\\
&=&\frac{1}{\omega_n}\int_{\SN}\frac{1-n-\overline{w}^2}{\|w-v\|^n}f(v)dv+\frac{n}{\omega_n}\int_{\SN} \frac{\overline{v}-\langle w,v\rangle\overline{w}}{\|w-v\|^{n+1}}\cdot \frac{\overline{w-v}}{\|w-v\|}f(v)dv\\
&+&(1-\frac{n}{2})\frac{\overline{w}}{\omega_n}\int_{\SN} \frac{\overline{w-v}}{\|w-v\|^n}f(v)dv+\frac{1-n}{n}f(v).
\end{eqnarray*}
\end{theorem}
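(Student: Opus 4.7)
The plan is to reduce $\Pi_{s,0} = (\overline{D_s+w})T$ to an explicit singular integral by breaking the operator into three pieces and treating each piece by differentiation under the integral sign, with careful regularization at the diagonal.

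First, I would use the identity $\overline{D_s} = \overline{D_0} - wE_r - \tfrac{n}{2}\overline{w}$ derived at the start of the section. Restricting to the sphere ($r=1$, $E_r$ becomes $E_w$, $\overline{w}w = 1$) and adding $\overline{w}$ gives the splitting
$$\Pi_{s,0}f(w) = \overline{D}Tf(w) + \overline{w}(1-E_w)Tf(w) - \tfrac{n}{2}Tf(w),$$
which is the first displayed equality to be established. The three operator pieces $\overline{D}T$, $E_wT$, and $T$ will then be made explicit separately.

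Second, I would differentiate the Cauchy kernel $G_s(w-v) = \overline{w-v}/\|w-v\|^n$ entry by entry to get
$$\partial_{w_j} G_s(w-v) = \frac{\overline{e_j}}{\|w-v\|^n} - n\,\frac{(w_j-v_j)\overline{w-v}}{\|w-v\|^{n+2}},$$
and pass $\partial_{w_j}$ through the singular integral defining $Tf$. Because the kernel is not smooth at $v=w$, this interchange produces a free term; the regularization formula from Chapter IX §7 of \cite{S} cited in the excerpt supplies exactly $\omega_n\overline{e_j}/n \cdot f(w)$. Summing $\sum_j \overline{e_j}\partial_{w_j}$ and $\sum_j w_j \partial_{w_j}$ against these derivatives, and using the Clifford identities $\sum_j \overline{e_j}^2 = 1-n$ (with $e_0^2=1$, $e_j^2=-1$ for $j\ge 1$), $\sum_j \overline{e_j}(w_j-v_j)\cdot\overline{w-v} = \overline{w-v}^2$, $\sum_j w_j \overline{e_j} = \overline{w}$, and $\sum_j w_j(w_j-v_j) = \langle w, w-v\rangle$, yields the two intermediate expressions for $\overline{D}Tf(w)$ and $E_wTf(w)$ already recorded in the excerpt.

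Third, I would assemble the three pieces. Combining $\overline{D}Tf + \overline{w}Tf - \overline{w}E_wTf - \tfrac{n}{2}Tf$ and substituting the expressions for $\overline{D}Tf$ and $E_wTf$, the terms with denominator $\|w-v\|^n$ that are constant in $v$ collapse to $(1-n-\overline{w}^2)/\|w-v\|^n$, while the $n(w_j-v_j)$-type contributions from the two derivative calculations combine. Writing $\overline{w-v} = \overline{w}-\overline{v}$ and $\langle w,w-v\rangle = 1 - \langle w,v\rangle$ rearranges the mixed piece into $n(\overline{v}-\langle w,v\rangle\overline{w})\cdot\overline{w-v}/\|w-v\|^{n+2}$. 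The $-\tfrac{n}{2}Tf$ summand produces the remaining $(1-\tfrac{n}{2})\overline{w}\cdot\overline{w-v}/\|w-v\|^n$ contribution once $\overline{w}Tf$ is absorbed, and the free terms $\tfrac{1-n}{n}f(w)$ from $\overline{D}Tf$ and $\tfrac{\overline{w}}{n}f(w)$ from $E_wTf$ cancel against one another except for the displayed residual $\tfrac{1-n}{n}f(w)$.

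The main obstacle is the careful bookkeeping across the differentiation-through-singularity step: one has to verify the free-term coefficient $\omega_n\overline{e_j}/n$ and propagate it consistently when weighted by $\overline{e_j}$ versus $w_j$, and then track signs when $\overline{w}$ is distributed across $1-E_w$, since any dropped factor of $n$ or sign there corrupts every coefficient in the final integral. Once this bookkeeping is done, the rest is algebraic rearrangement inside the Clifford algebra.
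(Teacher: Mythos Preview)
Your proposal is correct and follows essentially the same route as the paper: the paper likewise rewrites $\overline{D_s}+\overline{w}$ via $\overline{D_s}=\overline{D_0}-wE_r-\tfrac{n}{2}\overline{w}$ to obtain the splitting $\overline{D}T+\overline{w}(1-E_w)T-\tfrac{n}{2}T$, differentiates the Cauchy kernel termwise with the Mikhlin--Pr\"ossdorf regularization (Chapter~IX~\S7 of \cite{S}) to produce the free term $\omega_n\overline{e_j}/n\cdot f$, and then sums against $\overline{e_j}$ and $w_j$ using the same Clifford identities you list. The assembly into the displayed integral is then the same algebraic rearrangement you describe.
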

Since
$$\Pi_{s,0}=(\overline{D_s+w})T=(\overline{w}(\overline{\Gamma_0}-\frac{n}{2})+\overline{w})T=\overline{w}\overline{\Gamma_0}T+(1-\frac{n}{2})\overline{w}T,$$
where $\overline{\Gamma_0}=-\displaystyle\sum_{j=1}^{n} e_0e_j(x_0\partial_{x_j}-x_j\partial_{x_0})-\displaystyle\sum_{i=1,j>i}^{n} e_ie_j(x_i\partial_{x_j}-x_j\partial_{x_i})$. To estimate the $L^p$ norm of $\Pi_{s,0}$, we need the following result. \\
\begin{theorem}\label{Lp}
Suppose $p$ is a positive integer and $p>1$, then
$\|T\|_{L^p}\leq\displaystyle \frac{\omega_{n-1}}{4}.$
\end{theorem}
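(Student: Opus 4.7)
The plan is to reduce the estimate to a single convolution-type bound and then evaluate an explicit one-dimensional integral. Since Clifford conjugation preserves the norm, $|G_s(w-v)| = \|w-v\|^{1-n}$, so I begin with the pointwise bound
$$|Tf(w)|\le\int_{\Omega}\frac{|f(v)|}{\|w-v\|^{n-1}}\,dv.$$
The kernel $\|w-v\|^{1-n}$ is positive, rotationally invariant, and integrable on $\mathbb{S}^n$ (weakly singular, with integrable singularity of order $n-1<n$). Consequently Minkowski's integral inequality -- equivalently Young's inequality for the sphere with this $L^1$ convolution kernel -- produces a $p$-independent estimate
$$\|Tf\|_{L^p}\le\|f\|_{L^p}\,\sup_{w\in\mathbb{S}^n}\int_{\mathbb{S}^n}\frac{dv}{\|w-v\|^{n-1}}$$
valid for every $p>1$, so the hypothesis that $p$ be integer will be used only (if at all) in the final numerical sharpening.

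Next I would exploit the $SO(n+1)$-invariance of the chordal distance: the supremum is actually constant in $w$, and I may place $w$ at the north pole. Parametrizing $v$ by its colatitude $\theta\in[0,\pi]$ and an azimuthal $\sigma\in\mathbb{S}^{n-1}$, the spherical volume element factors as $\sin^{n-1}\theta\,d\theta\,d\sigma$, and the half-angle identity gives $\|w-v\|=2\sin(\theta/2)$. Combined with $\sin\theta=2\sin(\theta/2)\cos(\theta/2)$, the singular factor $\|w-v\|^{-(n-1)}$ cancels cleanly against part of the Jacobian and the problem reduces to the elementary Wallis-type integral
$$\omega_{n-1}\int_0^{\pi}\cos^{n-1}(\theta/2)\,d\theta \;=\; 2\,\omega_{n-1}\int_0^{\pi/2}\cos^{n-1}u\,du,$$
which evaluates in closed form via $\Gamma$-functions.

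The main obstacle is matching the closed-form evaluation to the specific constant $\omega_{n-1}/4$ in the statement: the naive Minkowski approach above controls $\|T\|$ only by the $L^1$-norm of the kernel, which is a larger quantity. To get down to $\omega_{n-1}/4$ one must retain the sign-carrying, vector-valued nature of $\overline{w-v}/\|w-v\|^n$ rather than passing to absolute values. I would therefore plan to split $Tf$ into its Clifford-scalar and Clifford-vector components, apply Cauchy--Schwarz (or Hölder with conjugate exponent $p'$, which is precisely where integer $p>1$ enters, allowing iteration of Hölder) to each piece separately, and use cancellation between antipodal points on $\mathbb{S}^n$ (the integral of $(\overline{w-v})$ against any odd function over a symmetric set vanishes) to recover the factor $\tfrac14$. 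Verifying that the symmetry-induced cancellation really delivers the stated constant, rather than a weaker multiple of $\omega_{n-1}$, is the step I would expect to occupy most of the work.
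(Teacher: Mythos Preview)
Your first four paragraphs \emph{are} the paper's proof. The paper takes absolute values, applies a Schur--H\"older argument (writing $G_s=G_s^{1/q}G_s^{1/p}$ and using Fubini, which is equivalent to your Minkowski/Young bound), rotates to the north pole, substitutes $v=\cos\theta\,e_0+\sin\theta\,\zeta$, uses the half-angle identity, and arrives at exactly the Wallis integral $\int_0^\pi\cos^{n-1}(\theta/2)\,d\theta=\sqrt{\pi}\,\Gamma(\tfrac{n}{2})/\Gamma(\tfrac{n+1}{2})$ that you wrote down. From there it invokes $\omega_n=2\pi^{(n+1)/2}/\Gamma(\tfrac{n+1}{2})$ and declares the bound $\omega_{n-1}/4$.

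The divergence is in your final paragraph. The paper does \emph{not} split into Clifford-scalar and Clifford-vector pieces, does \emph{not} exploit antipodal cancellation, and does \emph{not} use the vector-valued structure of $\overline{w-v}$ at all: it passes to $\|G_s(w-v)\|=\|w-v\|^{1-n}$ at the first step and never looks back. What you are missing is a normalization: in the proof the paper silently writes $T$ with a prefactor $1/\omega_n$ (consistent with the integral form of $\Pi_{s,0}$ displayed just above the theorem), so the bound it actually obtains is $\frac{1}{\omega_n}\int_{\mathbb{S}^n}\|w-v\|^{1-n}\,dv$, not the raw kernel $L^1$-norm you computed. That factor of $1/\omega_n$ is the entire source of the small constant; there is no cancellation mechanism.

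Your instinct that the arithmetic from the Gamma expression to $\omega_{n-1}/4$ is delicate is fair---the paper's last line is a one-sentence assertion---but the route to close the argument is bookkeeping with $\omega_n$, $\omega_{n-1}$ and the Gamma identities, not a new analytic idea. Drop the Clifford-splitting plan and instead track the $1/\omega_n$ normalization through your existing computation.
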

\begin{proof}
Since
\begin{eqnarray*}
\|Tf\|_{L^p}^p&=&(\frac{1}{\omega_n})^p\int_\Omega \|\int_\Omega G_s(w-v)f(v)dv^n\|^p dw^n\\
&=&(\frac{1}{\omega_n})^p\int_\Omega \|\int_\Omega G_s(w-v)^{\frac{1}{q}}G_s(w-v)^{\frac{1}{p}}f(v)dv^n\|^p dw^n\\
&\leq& (\frac{1}{\omega_n})^p\int_\Omega \big((\int_\Omega \|G_s(w-v)\|dv^n)^{\frac{p}{q}} \cdot \int_\Omega \|G_s(w-v)\| \|f(v)\|^p dv^n \big)dw^n\\
&\leq& (\frac{1}{\omega_n})^pC_1^{\frac{p}{q}} \int_\Omega \int_\Omega \|G_s(w-v)\| \|f(v)\|^p dv^n dw^n\\
&=& (\frac{1}{\omega_n})^pC_1^{\frac{p}{q}} \int_\Omega \|f(v)\|^p (\int_\Omega \|G_s(w-v)\|dw^n) dv^n\\
&\leq& (\frac{1}{\omega_n})^pC_1^{\frac{p}{q}+1} \int_\Omega \|f(v)\|^p (\int_\Omega \|G_s(w-v)\|dw^n) dv^n\\
&=&(\frac{1}{\omega_n})^pC_1^p \cdot \int_\Omega \|f(v)\|^p dv^n\\
&=&(\frac{1}{\omega_n})^pC_1^p \cdot \|f\|_{L^p}^p
\end{eqnarray*}
where $p,\ q> 1$ are positive integers and $\displaystyle\frac{1}{p}+\displaystyle\frac{1}{q}=1$, where
\begin{eqnarray*}
C_1\leq\big| \int_{\SN}\|G_s(w-v)\|dv^n \big|=\big| \int_{\SN}\frac{1}{\|w-v\|^{n-1}}dv^n \big|.
\end{eqnarray*}
Due to the symmetry we can choose any fixed point $w$, hence we choose $w=(1,0,0,...,0)$ and $v=(x_0, x_1,\cdots,x_n)\in \SN$, i.e. $\displaystyle\sum_{i=0}^n\|x_i\|^2=1$.
Let $v=\cos\theta e_0+ \sin\theta \zeta$, where $\zeta$ is a vector on $n-1$-sphere, then we have $dv^n=\sin^{n-1}\theta d\theta$,
\begin{eqnarray*}
&&\int_{\SN} \frac{1}{[2(1-x_1)]^{\frac{n-1}{2}}}dv^n\\
&=&2^{-\frac{n-1}{2}} \int_0^\pi \frac{1}{(1-\cos\theta)^{\frac{n-1}{2}}}\sin^{n-1}\theta d\theta\\
&=&2^{-\frac{n-1}{2}} \int_0^\pi (2\sin^2\frac{\theta}{2})^{-\frac{n-1}{2}}(2\sin\frac{\theta}{2}\cos\frac{\theta}{2})^{n-1}d\theta\\
&=& \int_0^\pi \cos^{n-1}\frac{\theta}{2} d\theta\\
&=& 2\cdot \frac{1}{2}\cdot \displaystyle\frac{\Gamma(\frac{1}{2})\Gamma(\frac{n}{2})}{\Gamma(\frac{n-1}{2}+1)}\\
&=& \sqrt{\pi}\displaystyle\frac{\Gamma(\frac{n}{2})}{\Gamma(\frac{n+1}{2})}.
\end{eqnarray*}
Since $\omega_n=\displaystyle\frac{2\pi^{(n+1)/2}}{\Gamma(\frac{n+1}{2})}$, we have
$\|T\|_{L^p}\leq \displaystyle\frac{\omega_{n-1}}{4}$.
\end{proof}
\par
Let $G_0$ be the operator defined by
\begin{eqnarray*}
G_0 g(w)=-\frac{1}{(n-1)\omega_n}\int_{\SN}\frac{1}{\|w-v\|^{n-1}}g(v)dv,\ n\geq 3,
\end{eqnarray*}
and $R_s=\overline{\Gamma_0}\circ G_0$ is a Riesz transformation of gradient type (see \cite{AL}).
Then we have,

\begin{proposition}\cite {AL},
The operator $R_s$ is a $L^p$ operator and the $L^p$ norm is bounded by
$$\frac{\pi^{1/2}}{2\sqrt2}(\frac{p}{p-1})^{1/2}B_p,$$
where $B_p=C_{M,p}+C_p$, $C_{M,p}$ is the $L^p$ norm of the maximal truncated Hilbert transformation on $\mathbb S^1$, and $C_p=cot\frac{\pi}{2p^{*}},\ \frac{1}{p}+\frac{1}{p^*}=1$.
\end{proposition}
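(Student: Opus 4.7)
The plan is to identify $R_s=\overline{\Gamma_0}\circ G_0$ explicitly as a Riesz transformation of gradient type on $\SN$ and then invoke the sharp $L^p$ estimate of Arcozzi--Li. First, I would write out the kernel of $R_s$ by differentiating the kernel of $G_0$ under the operator $\overline{\Gamma_0}=-\sum_{j=1}^n e_0e_j(x_0\partial_{x_j}-x_j\partial_{x_0})-\sum_{i<j}e_ie_j(x_i\partial_{x_j}-x_j\partial_{x_i})$. Because each constituent vector field is tangential to $\SN$ and annihilates $\|w-v\|$ at leading order, the differentiation produces a Calder\'on--Zygmund kernel on the sphere of the form $K(w,v)/\|w-v\|^n$, with $K$ antipodally odd in $v-w$. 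This antipodal-odd homogeneity is the structural hypothesis for a Riesz transformation of gradient type.

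Second, I would apply the method of rotations on $\SN$ as in \cite{AL}: parametrize $v$ in polar coordinates centred at $w$ via a point $\zeta$ on the equatorial $(n-1)$-sphere and an angular variable $\theta\in(0,\pi)$. Minkowski's integral inequality reduces the $L^p(\SN)$ bound for $R_s$ to the $L^p$ bound of the corresponding singular operator acting along the great circle $\{\cos\theta\, w+\sin\theta\, \zeta\}$. That one-dimensional operator is, up to truncation, the Hilbert transform on $\mathbb S^1$, and its principal value is controlled by the sharp Pichorides constant $C_p=\cot\frac{\pi}{2p^*}$ while the truncations are controlled by the maximal truncated Hilbert transform norm $C_{M,p}$, producing the composite constant $B_p=C_{M,p}+C_p$.

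Third, I would collect the geometric factors. The integration over $\zeta\in\mathbb S^{n-2}$ against the normalized surface measure yields a prefactor of order $\pi^{1/2}/(2\sqrt{2})$ after evaluating the relevant beta-integral, and the dualization step in the method of rotations introduces a Hölder factor of $(p^*)^{1/2}=(p/(p-1))^{1/2}$. Combining these with $B_p$ gives exactly $\frac{\pi^{1/2}}{2\sqrt 2}\bigl(\frac{p}{p-1}\bigr)^{1/2}B_p$, which is the stated bound.

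The main obstacle would be executing the method of rotations with sharp constants on a curved background: one must replace affine lines by great circles, and tracking the truncation as $\theta\to 0,\pi$ requires the full Pichorides constant rather than an interpolation estimate. Since the bound is attributed to \cite{AL}, my proof amounts to checking that $R_s$, built from the tangential rotation operator $\overline{\Gamma_0}$ and the spherical Riesz potential $G_0$, meets the kernel hypotheses used there, and then quoting their theorem.
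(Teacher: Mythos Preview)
The paper supplies no proof of this proposition; it is stated with the citation \cite{AL} and used as a black box. Your proposal is therefore not competing with any argument in the paper, and your concluding sentence---that the task reduces to verifying $R_s=\overline{\Gamma_0}\circ G_0$ satisfies the kernel hypotheses of Arcozzi--Li and then quoting their theorem---is exactly the right stance and matches the paper's treatment.

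Your sketch of the Arcozzi--Li argument (method of rotations along great circles, reduction to the Hilbert transform on $\mathbb S^1$, splitting into the Pichorides constant $C_p$ and the maximal truncated constant $C_{M,p}$) is a reasonable outline of what happens in \cite{AL}, and goes well beyond what the paper itself provides. The only caution is that your first step, writing out the kernel of $R_s$ and checking antipodal oddness, is the part that is genuinely specific to this paper's operator rather than to \cite{AL}; the paper does not carry this out explicitly, so if you intend a self-contained write-up you would need to do that computation carefully rather than assert it.
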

Hence,
\begin{eqnarray}
\nonumber
&&\|\overline{\Gamma_0}\frac{1}{\omega_n}\int_\Omega \frac{1}{\|w-v\|^{n-1}}\cdot \frac{\overline{w-v}}{\|w-v\|}f(v)dv\|_{L^p}\\
\nonumber &\leq& (n-1)\frac{\pi^{1/2}}{2\sqrt2}(\frac{p}{p-1})^{1/2}B_p \|f(v)\|_{L^p}\\
&=& (n-1)\frac{\pi^{1/2}}{2\sqrt2}(\frac{p}{p-1})^{1/2}B_p \|f(v)\|_{L^p} \label{eqn1}.
\end{eqnarray}

Recall that $\Pi_{s,0}f=(\overline{D_s}+\overline{w})Tf=(\overline{w}(\overline{\Gamma_0}-\frac{n}{2})+\overline{w})Tf=\overline{w}\overline{\Gamma_0}Tf+(1-\frac{n}{2})\overline{w}Tf$,

\par
and by Theorem \ref{Lp},
\begin{eqnarray}
||(1-\frac{n}{2})\overline{w}Tf||_{L^p}=\|(1-\frac{n}{2})\frac{\overline{w}}{\omega_n}\int_\Omega \frac{\overline{w-v}}{\|w-v\|^n}f(v)dv\|_{L^p}
\leq (\frac{n}{2}-1) \displaystyle\frac{\omega_{n-1}}{4}\|f\|_{L^p}\label{eqn2}.
\end{eqnarray}

By inequalities (\ref{eqn1}) and (\ref{eqn2}), we show that $\Pi_{s,0}$ is a bounded operator mapping from $L^p$ space to itself, and
\begin{eqnarray*}
\|\Pi_{s,0} \|_{L^p}\leq (n-1)\frac{\pi^{1/2}}{2\sqrt2}(\frac{p}{p-1})^{1/2}B_p+(\frac{n}{2}-1) \displaystyle\frac{\omega_{n-1}}{4}.
\end{eqnarray*}

\textbf{Remark:} The spherical $\Pi$-type operator $\Pi_{s,0}$ preserves most properties of the $\Pi$ operator in Euclidean space and more importantly, it is a singular integral operator which helps to solve the corresponding Beltrami equation. Unfortunately, it is also only an $L^2$ isometry up to isomorphism as shown in Theorem \ref{isom up} . In the next section, we will use the spectrum theory of differential operators to claim that there is a spherical $\Pi$-type operator which is also an $L^2$ isometry.

%%%%%%%%%%%%%%%
\section{Eigenvectors of spherical Dirac type operators}
In this section, we will investigate the spectrums of several spherical Dirac type operators and the spherical Laplacian. During the investigation, we will point out there is a spherical $\Pi$-type operator which is an $L^2$ isometry.\\
\par
Since $\Gamma_0=\overline{x}D_0-E_r$, it is easy to verify the fact that if $p_m$ is a monogenic polynomial and is homogeneous with degree $m$, that is $D_0f_m=0$ and $E_rf_m=mf_m$, then $\Gamma_0f_m=-mf_m$, so $f_m$ is an eigenvector of $\Gamma_0$ with eigenvalue $-m$. Similarly, if $\overline{D_0}g_m=0$, $g_m$ is an eigenvector of $\overline{\Gamma_0}$ with eigenvalue $-m$.\\
\par
Let $\mathcal{H}_k$ be the space of $\ClN$-valued harmonic polynomials homogeneous of degree k and $\Mk$ be the $\ClN$-valued monogenic polynomials homogeneous of degree k, $\overline{\Mk}$ is the clifford involution of $\Mk$. By an Almansi-Fischer decomposition \cite{DLRV}, $\mathcal{H}_k=\Mk \bigoplus \bar{x}\overline{\Mkk}$. Hence, for for all harmonic functions with homogeneity of degree $k$, there exist $p_k\in Ker D_0$, and $p_{k-1}\in Ker \overline{D_0}$ such that $h_k=p_k+\bar{x}\overline{p_{k-1}}$. Then, it is easy to get that $\Gamma_0p_k=-kp_k$ and $\Gamma_0\bar{x}\overline{p_{k-1}}=(n+k)\bar{x}\overline{p_{k-1}}$. \\
\par
Let $H_m$ denote the restriction to $\SN$ of the space of $\ClN$-valued harmonic polynomials with homogeneity of degree $m$. $P_m$ is the space of spherical $\ClN$-valued left monogenic polynomials with homogeneity of degree $-m$ and $Q_m$ is the space of spherical $\ClN$-valued left monogenic polynomials with homogeneity of degree $n+m$, $m=0,1,2,...$.Then we have $H_m=P_m\bigoplus Q_m$ (\cite{Balinsky}). It is well known that $L^2(\SN)=\displaystyle\sum_{m=0}^\infty H_m$ (\cite{Ax}), it follows $L^2(\SN)=\displaystyle\sum_{m=0}^\infty P_m\bigoplus Q_m$.
If $p_m\in P_m$, since $\Gamma_0 p_m=-mp_m$,  it is an eigenvector of $\Gamma_0$ with eigenvalue $-m$, and for $q_m\in Q_m$, it is an eigenvector of $\Gamma_0$ with eigenvalue $n+m$. Therefore, the spectrum of $\Gamma_0$ is $\sigma(\Gamma_0)=\{-m, m=1,2,...\}\cup \{m+n, m=0,1,2,...\},$. Since $D_s=w(\Gamma_0-\displaystyle\frac{n}{2})$, the spectrum of $D_s$ is $\sigma(D_s)=\sigma(\Gamma_0)-\frac{n}{2}$, which is $\{-m-\frac{n}{2}, m=0,1,2,...\}\cup \{m+\frac{n}{2}, m=0,1,2,...\}$.\\
\par
As mentioned in the previous section that $D_sT=TD_s=I$, and we know that $D_s: P_m\longrightarrow Q_m$ (\cite{Balinsky}). Hence,we have $T:\ Q_m\longrightarrow P_m$ and the spectrum of $T$ is the reciprocal of the spectrum of $D_s$, which is $\sigma(T)=\{\frac{1}{m+\frac{n}{2}}, m=0,1,2,...\}\bigcup\{\frac{1}{-m-\frac{n}{2}}, m=0,1,2,...\}$. Similar arguments apply for $\overline{D_s}$ and  $\overline{T}$, in fact $\sigma(\overline{D_s})=\sigma(D_s)$£¬ and  $\sigma(\overline{T})= \sigma(T)$.\\
\par
Now with similar strategy as in \cite{Balinsky}, we consider the operator $\overline{D_s}T$ which maps $L^2(\SN)$ to $L^2(\SN)$. If $u\in C^1(\SN)$ then $u\in L^2(\SN)$. It follows that
\begin{eqnarray*}
u=\displaystyle\sum_{m=0}^\infty\sum_{p_m\in P_m}p_m+\sum_{m=0}^{-\infty}\sum_{q_m\in Q_m}q_m,
\end{eqnarray*}
where $p_m$ and $q_m$ are eigenvectors of $\Gamma_0$. Further the eigenvectors $p_m$ and $q_m$ can be chosen so that within $P_m$ they are mutually orthogonal. The same can be done for the eigenvectors $q_m$. Moreover, as $u\in C^1(\SN)$ then $\overline{D_s}Tu\in C^0(\SN)$ and so $\overline{D_s}Tu\in L^2(\SN)$. Consequently,

\begin{eqnarray*}
&&\overline{D_s}Tu=\displaystyle\sum_{m=0}^\infty\sum_{p_m\in P_m}\overline{D_s}Tp_m+\sum_{m=0}^{\infty}\sum_{q_m\in Q_m}\overline{D_s}Tq_m\\
&=&\displaystyle\sum_{m=0}^\infty\sum_{q_m\in Q_m}\overline{D_s}\frac{1}{m+\frac{n}{2}}q_m+\sum_{m=0}^{\infty}\sum_{p_m\in P_m}\overline{D_s}\frac{1}{-m-\frac{n}{2}}p_m
\end{eqnarray*}
and
\begin{eqnarray*}
&&||\overline{D_s}Tu||^2_{L^2}=
\displaystyle\sum_{m=0}^\infty(\frac{1}{m+\frac{n}{2}})^2\sum_{q_m\in Q_m}\|\overline{D_s}q_m\|_{L^2}+\sum_{m=0}^{\infty}(\frac{1}{-m-\frac{n}{2}})^2\sum_{p_m\in P_m}\|\overline{D_s}p_m||_{L^2}\\
&=&\displaystyle\sum_{m=0}^\infty(\frac{1}{m+\frac{n}{2}})^2(m+\frac{n}{2})^2\sum_{p_m\in P_m}\|p_m||_{L^2}+\sum_{m=0}^{\infty}(\frac{1}{-m-\frac{n}{2}})^(-m-\frac{n}{2})^2\sum_{q_m\in Q_m}\|q_m||_{L^2}\\
&=&\displaystyle\sum_{m=0}^\infty\sum_{p_m\in P_m}||p_m||_{L^2}+\sum_{m=0}^{\infty}\sum_{q_m\in Q_m}||q_m||_{L^2}\\
&=&||u||_{L^2}.
\end{eqnarray*}
This shows $\overline{D_s}T$ is an $L^2(\SN)$ isometry.\\
\par
By the help of the spectrum of $T$, we have the $L^2$ norm estimate of the $\Pi_{s,0}$, that is
\begin{eqnarray*}
\|\Pi_{s,0}u\|_{L^2}&\leq &\|\overline{D_s}Tu\|_{L^2}+\|\overline{w}\|_{L^2}\|Tu\|_{L^2}\\
&=&\|u\|_{L^2}+(\frac{1}{m+\frac{n}{2}})^2(\sum_{m=0}^{\infty}\sum_{p_m\in P_m}\|p_m\|_{L^2}+\sum_{m=0}^{\infty}\sum_{q_m\in Q_m}\|q_m\|_{L^2})\\
&\leq & (1+\frac{4}{n^2})\|u\|_{L^2}.
\end{eqnarray*}
Hence we have $\|\Pi_{s,0}\|_{L^2}\leq 1+\displaystyle\frac{4}{n^2}$.\\

\par
By Theorem 13, $\Delta_s=\overline{D_s}(D_s+w)=(\overline{D_s}-\overline{w})D_s=D_s(\overline{D_s}+\overline{w})=(D_s-w)\overline{D_s}$.\\
Since $D_s=w(\Gamma_0-\displaystyle\frac{n}{2}),\ \overline{D_s}=\overline{w}(\overline{\Gamma_0}-\displaystyle\frac{n}{2})$, a straightforward calculation shows us that
\begin{eqnarray*}
\Delta_s&=&-(\Gamma_0-\frac{n}{2})^2-\overline{w}w(\Gamma_0-\frac{n}{2})=-\Gamma_0^2+(n-1)\Gamma_0-(\frac{n^2}{4}-\frac{n}{2})\\
&=&-(\overline{\Gamma_0}-\frac{n}{2})^2-\overline{w}w(\overline{\Gamma_0}-\frac{n}{2})=-\overline{\Gamma_0}^2+(n-1)\overline{\Gamma_0}-(\frac{n^2}{4}-\frac{n}{2}).
\end{eqnarray*}
Since for $0<r<1$, any harmonic function $h_m\in B(0,r)=\{x\in\Rn:\ ||x||<r\}$ with homogeneity degree m, we have $h_m=f_m+g_m$, where $f_m\in Ker D_0$ and $g_m\in\overline{D_0}$, they are both homogeneous with degree m (see Lemma 3 \cite{FS}). Consequently,
\begin{eqnarray*}
\Delta_sf_m=(-\Gamma_0^2+(n-1)\Gamma_0-(\frac{n^2}{4}-\frac{n}{2}))f_m=(-m^2-m(n-1)-(\frac{n^2}{4}-\frac{n}{2}))f_m,
\end{eqnarray*}
and
\begin{eqnarray*}
\Delta_sg_m=(-\overline{\Gamma_0}^2+(n-1)\overline{\Gamma_0}-(\frac{n^2}{4}-\frac{n}{2}))g_m=(-m^2-m(n-1)-(\frac{n^2}{4}-\frac{n}{2}))g_m.
\end{eqnarray*}
Hence
\begin{eqnarray*}
\Delta_sh_m&=&\Delta_s(f_m+g_m)=(-m^2-m(n-1)-(\frac{n^2}{4}-\frac{n}{2}))(f_m+g_m)\\
&=&(-m^2-m(n-1)-(\frac{n^2}{4}-\frac{n}{2}))h_m.
\end{eqnarray*}
Since for any function $u\in L^2(\SN): \Omega \mapsto \ClN$, $u=\displaystyle\sum_{m=0}^\infty h_m$, where $h_m\in H_m$, it follows that $\Delta_s$ has spectrum $\sigma(\Delta_s)=\{-m^2-m(n-1)-(\frac{n^2}{4}-\frac{n}{2}):m=0,1,2,...\}$.

\par
In order to preserve the property of isometry of the $\Pi$-operator on the sphere, we define the isometric spherical $\Pi$-operator as $\Pi_{s,1}$ as $\Pi_{s,1}=\overline{D_s}T$, which is isometry in $L^2$ space. We can solve the Beltrami equation related to $\Pi_{s,1}$ as follows.\\
\par
Let $\Omega \subseteq \Sn$ be a bounded, simply connected domain with sufficiently smooth boundary, and $q, f: \Omega\longrightarrow \Cln$,  q is a measurable function, and f is sufficiently smooth. The spherical Beltrami equation is as follows:
$$ D_sf=q\overline{D_s}f.$$
It has a unique solution $f=\phi+Th$ where $\phi$ ia an arbitrary left-monogenic function such that $D_sf=0$ and $h$ is the solution of an  integral equation
$$h=q( \overline{D_s}\phi+\Pi_{s,1} h).$$
By the Banach fixed point theorem, the previous integral equation has a unique solution in the case of
$$\|q\|\leq q_0< \frac{1}{\|\Pi_{s,1}\|}£¬$$
with $q_0$ being a constant. Hence, we can use the estimate of the $L^p$ norm of $\Pi_{s,1}$ with $p>1$, where
\begin{eqnarray*}
\|\Pi_{s,1} \|_{L^p}\leq (n-1)\frac{\pi^{1/2}}{2\sqrt2}(\frac{p}{p-1})^{1/2}B_p+\frac{n}{2} \displaystyle\frac{\omega_{n-1}}{4}.
\end{eqnarray*}

Wanqing Cheng\\
Department of Mathematical Science, University of Arkansas, Fayetteville, Arkansas, 72701, USA. Email: wcheng@uark.edu\\
\par
John Ryan\\
Department of Mathematical Science, University of Arkansas, Fayetteville, Arkansas, 72701, USA. Email: jryan@uark.edu\\
\par
Uwe K\"{a}hler\\
 Mathematics Department, Universidade de Aveiro, 3810-193 Aveiro, Portugal. Email: ukaehler@ua.pt

\end{document}